\newtheorem{thm}{Theorem}[section]
\newtheorem{lemma}[thm]{Lemma}
\newtheorem{corol}[thm]{Corollary}
\newtheorem{prop}[thm]{Proposition}
\theoremstyle{definition}
\newtheorem{defi}[thm]{Definition}
\theoremstyle{remark}
\newtheorem{rem}[thm]{Remark}
\newcommand{\p}{\partial}
\newcommand{\vphi}{\varphi}
\DeclareMathOperator{\tr}{tr}
\DeclareMathOperator{\id}{id}
\DeclareMathOperator{\const}{const}
\DeclareMathOperator{\vol}{vol}
\DeclareMathOperator{\Ima}{Im}
\renewcommand{\phi}{\varphi}
\renewcommand{\epsilon}{\varepsilon}
\renewcommand{\bar}{\overline}
\renewcommand{\tilde}{\widetilde}
\DeclareMathOperator{\img}{Im}
\renewcommand{\Im}{\img}
\newcommand{\de}{\partial}
\newcommand{\debar}{{\bar \partial}}
\newcommand{\call}{\mathcal}
\newcommand{\M}{\mathcal M}
\renewcommand{\H}{\mathcal H}
\newcommand{\Svol}[2]{ \frac 1 {{#2}!} \eta_{#1} \wedge d\eta_{#1}^{#2}} 
\newcommand{\Cka}[1]{{C^{#1, \alpha}(g)}}
\numberwithin{equation}{section}
\title[On the geodesic problem for the Dirichlet metric]{On the geodesic problem for the Dirichlet metric
and the Ebin metric on the space of Sasakian metrics}
\author{Simone Calamai, David Petrecca and Kai Zheng}
\address{(S.~Calamai) Dip. di Matematica e Informatica ``U. Dini'' - Universit\`a di Firenze \endgraf Viale Morgagni 67A -  Firenze - Italy}
\email{simocala at gmail.com}
\address{(D.~Petrecca) Institut f\"ur Differentialgeometrie, Leibniz Universit\"at Hannover
 \endgraf  Welfengarten 1, 30167
- Hannover - Germany}
\email{petrecca at math.uni-hannover.de}
\address{(K.~Zheng) Mathematics Institute, University of Warwick
 \endgraf Coventry, CV4 7AL, UK}
\email{K.Zheng at warwick.ac.uk}
\thanks{The first and the second author are supported by INdAM}
\subjclass[2010]{53C55 (primary);  32Q15, 58D27 (secondary)}
\begin{document}

\maketitle

\begin{abstract}
We study the geodesic equation for the Dirichlet (gradient) metric in the space of K\"ahler potentials.
We first solve the initial value problem for the geodesic equation of the \emph{combination metric}, including the gradient metric. We then discuss a comparison theorem between it and the Calabi metric.
As geometric motivation of the combination metric, we find that the Ebin metric restricted to the space of type II deformations of a Sasakian structure
is the sum of the {Calabi metric} and the gradient metric.
\end{abstract}
\tableofcontents

\section*{Introduction}
This is the sequel of the previous paper \cite{calazheng} on the \emph{Dirichlet metric}, which here will be called gradient metric. We recall the background briefly.
The idea of defining a Riemannian structure on the space of all metrics on a
fixed manifold goes back to the sixties
with the work of Ebin \cite{ebin}.
His work concerns the pure Riemannian setting and, among other things, defines a
weak Riemannian metric on the space
$\M$ of all Riemannian metrics on a fixed compact Riemannian manifold $(M, g)$.
The geometry of the Hilbert manifold $\M$ was later studied by Freed and
Groisser in \cite{freedgroisser} and
Gil-Medrano and Michor in \cite{michor}. In particular the curvature and the
geodesics of $\M$ were computed.

Let $(M,\omega)$ be a compact K\"ahler manifold.
The space $\H$ of K\"ahler metrics cohomologous to $\omega$ is isomorphic to the space of the K\"ahler potentials modulo constants. It can be endowed with three different metrics, known as the \emph{Donaldson-Mabuchi-Semmes} $L^2$-metric \eqref{dms},
the \emph{Calabi} metric \eqref{calabi} and the \emph{Dirichlet (or gradient)} metric \eqref{gradient}.

The Calabi metric goes back to Calabi \cite{calabi1954space} and it was later studied by the first author in
\cite{calabimetric} where its Levi-Civita covariant derivative is computed, it
is proved that it is of constant sectional curvature, that $\H$ is then isometric
to a portion of a sphere in $C^\infty(M)$ and that both the Dirichlet problem (find a geodesic connecting two fixed points) and the Cauchy problem (find a geodesic with assigned starting point and speed) admit
smooth explicit solutions.

The gradient metric was introduced and studied in \cite{calabimetric,calazheng}.
Its Levi-Civita connection, geodesic equation and curvature are written down in \cite{calazheng}.
In this paper, we continue to study its geometry. We solve the Cauchy problem of its geodesic equation, so we prove it is locally well-posed, unlike the corresponding problem for the $L^2$ metric, which is known to be ill-posed. 

Actually, we define a more general metric, the linear combination of the three metrics on $\H$ we call \emph{combination metric} whose special instance is the  \emph{sum metric}, i.e. the sum of the gradient and Calabi metrics.

We denote the H\"older spaces with respect to the fixed K\"ahler metric $g$ by $C^{k,\alpha}(g)$. We prove that our Cauchy problem is well-posed (See Thm.~ \ref{thm:mainthmgenkahl}, \ref{thm:smoothsum} and \ref{thm:mainthmgradient}).
\begin{thm}\label{thm: sum metric main theorem rough statement}
On a compact K\"ahler manifold, for every initial K\"ahler potential $\phi_0$,
 and initial speed $\psi_0$ in $C^{k,\alpha}(g)$, for all $k\geq 6$ and  $\alpha \in (0,1)$, there exists, for a small time $T$, a unique $C^2([0,T],C^{k,\alpha}(g)\cap \mathcal H)$ geodesic for the combination metric, starting
from $\phi_0$ with initial velocity $\psi_0$. 
 Moreover if $(\phi_0, \psi_0)$ are smooth then also the solution is.
\end{thm}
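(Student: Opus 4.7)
The plan is to cast the geodesic equation of the combination metric as a second-order ODE of the form
\[
\ddot\phi = F(\phi,\dot\phi)
\]
on the Banach manifold $U \subset \mathcal H \cap C^{k,\alpha}(g)$, and then solve it by Picard iteration. The crucial point is that the gradient term with positive coefficient $\beta$ turns the operator encoding the metric tensor into a strictly positive elliptic operator. This is precisely what makes the Cauchy problem well-posed here, whereas it is ill-posed for the bare $L^2$ metric (Donaldson--Mabuchi--Semmes), whose associated operator is only pointwise multiplication by the volume form.

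First I would derive the Euler--Lagrange equation by varying the energy functional of the combination metric and collecting the terms proportional to $\ddot\phi$. These assemble into
\[
L_\phi(\ddot\phi) = N(\phi,\dot\phi),
\]
where $L_\phi$ is the linear operator associated to the metric tensor: the $L^2$ part contributes a nonnegative zeroth-order piece, the gradient part contributes the second-order elliptic operator $-\beta\Delta_\phi$, and the Calabi part (when present) contributes $\Delta_\phi^2$. The right-hand side $N$ is a differential polynomial in $\phi$ and $\dot\phi$ whose maximal order of derivatives dictates the hypothesis $k \geq 6$, ensuring that $N(\phi,\dot\phi)$ lies in a Hölder class even after inversion of $L_\phi$. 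Because $\beta > 0$, $L_\phi$ is self-adjoint and strictly positive on the subspace $\{u : \int_M u\,\om_\phi^n = 0\}$, so Schauder theory produces a bounded inverse $L_\phi^{-1}$ with a two-derivative regularity gain. The constant kernel is harmless since $\mathcal H$ is identified with potentials modulo constants.

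The heart of the proof will be to show that
\[
F(\phi,\psi) := L_\phi^{-1}\,N(\phi,\psi)
\]
is locally Lipschitz as a map $U \times C^{k,\alpha}(g) \to C^{k,\alpha}(g)$ on a small neighborhood $U$ of $\phi_0$. The main obstacle lies in controlling the $\phi$-dependence of $L_\phi^{-1}$: one has to verify that $\phi \mapsto L_\phi^{-1}$ is Lipschitz (indeed smooth) in the operator norm on bounded linear maps between the relevant Hölder spaces, uniformly on $U$. This follows from uniform Schauder estimates combined with openness of the positivity condition $\phi \in \mathcal H$, which guarantees that $L_\phi$ stays uniformly invertible on a small $C^{k,\alpha}$-neighborhood of $\phi_0$. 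Standard Hölder multiplication and composition estimates then transfer these bounds to the nonlinear term $N$ and hence to $F$; this bookkeeping is where the main technical effort goes.

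Once $F$ is shown to be locally Lipschitz, the standard Banach-space Picard--Lindelöf theorem applied to the integral formulation
\[
\phi(t) = \phi_0 + t\psi_0 + \int_0^t (t-s)\,F(\phi(s),\dot\phi(s))\,ds
\]
produces a unique $C^2([0,T], C^{k,\alpha}(g) \cap \mathcal H)$ solution for some small $T>0$, together with continuous dependence on $(\phi_0,\psi_0)$. The local exponential map will then be defined by $\exp_{\phi_0}(\psi_0) := \phi(1)$ for sufficiently small $\psi_0 \in T_{\phi_0}\mathcal H$, inheriting its continuity from the flow of $F$. Finally, smoothness under smooth initial data follows by a standard bootstrap: solutions at every higher regularity class $C^{k+j,\alpha}$ exist on some time interval, and by uniqueness they must coincide with the $C^{k,\alpha}$ solution on the common domain, so all of them in fact extend to $[0,T]$ and the limit $j \to \infty$ gives a $C^\infty$ geodesic.
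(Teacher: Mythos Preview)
Your existence and uniqueness argument follows the same strategy as the paper: rewrite the geodesic equation as $\ddot\phi = L_\phi^{-1}N(\phi,\dot\phi)$, use Schauder estimates to control the inverse elliptic operator and its Lipschitz dependence on $\phi$, and run a Picard iteration on the integral formulation. The paper writes it as a first-order system on $(\phi,\psi)$ rather than your second-order integral equation, but this is cosmetic.

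There is, however, a genuine gap in your smoothness bootstrap. You write that solutions at regularity $C^{k+j,\alpha}$ exist on some interval, coincide with the $C^{k,\alpha}$ solution by uniqueness, ``so all of them in fact extend to $[0,T]$.'' This last implication is not justified and is exactly the delicate point. A priori the maximal lifespan $\epsilon(k+1,\alpha)$ obtained from the contraction argument may be strictly smaller than $\epsilon(k,\alpha)$: the $C^{k+1,\alpha}$-norm of the solution could blow up at a time where the $C^{k,\alpha}$-norm remains bounded. Coinciding with a longer-lived but less regular solution does not by itself prevent this.

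The paper closes this gap by a different mechanism. It observes that the geodesic equation is ``of order zero'' in the sense that both sides have the same differential order, so that after applying a spatial derivative $\partial_A$ one obtains a system that is \emph{linear} in $(\partial_A\phi,\partial_A\psi)$, with coefficients depending only on the $C^{k,\alpha}$ solution $(\phi,\psi)$. A linear ODE in a Banach space exists for as long as its coefficients do; hence $(\partial_A\phi,\partial_A\psi)$ stays in $C^{k,\alpha}$ on the full interval $(-\epsilon(k,\alpha),\epsilon(k,\alpha))$, giving $\epsilon(k+1,\alpha)\ge\epsilon(k,\alpha)$. You would need to supply this (or an equivalent) argument; the naive bootstrap does not suffice.
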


Furthermore, we prove a Rauch type comparison theorem of the Jacobi fields (Theorem \ref{rauchcomparisontheorem}) between the gradient metric and the Calabi metric.
\begin{thm}Let $\gamma_G$ and $\gamma_C$ be two geodesics of equal length with respect to the gradient metric and the Calabi metric respectively and suppose that for every $X_G\in T_{\gamma_G(t)}\mathcal H$ and $X_C\in T_{\gamma_C(t)}\mathcal H$, we have 
$$K_G(X_G,\gamma_G'(t))\leq K_C(X_C,\gamma_C'(t)).$$
Let $J_G$ and $J_C$ be the Jacobi fields along $\gamma_G$ and $\gamma_C$ such that
\begin{itemize}
\item $J_G(0)=J_C(0)=0$,
\item $J'_G(0)$ is orthogonal to $\gamma'_G(0)$ and $J'_C(0)$ is orthogonal to $\gamma'_C(0)$ ,
\item $\|J'_G(0)\|=\|J_C'(0)\|.$
\end{itemize}
then we have, for all $t\in [0,T]$,
$$\|J_G(t)\|\geq\frac{\left|\sin \left(2t\sqrt{\vol}\right)\right|}{\sqrt{\vol}}.$$
\end{thm}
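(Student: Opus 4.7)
The strategy is to implement a classical Rauch-type comparison argument in the weak Riemannian setting of $\H$, taking full advantage of the fact, recalled in the introduction, that the Calabi metric has constant sectional curvature. Because of this, the Jacobi equation for $J_C$ reduces to a scalar linear ODE with constant coefficients whose solution is precisely the trigonometric function appearing on the right-hand side; the substantive content of the theorem is therefore the inequality $\|J_G(t)\|\geq\|J_C(t)\|$ on $[0,T]$.

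My first step would be to pass from the vector Jacobi equation to a scalar differential inequality for $f(t):=\|J(t)\|$. Since $J(0)=0$ and $J'(0)\perp\gamma'(0)$, and since $\langle J,\gamma'\rangle''=-\langle R(J,\gamma')\gamma',\gamma'\rangle=0$ by the antisymmetry of the curvature tensor, the scalar function $\langle J(t),\gamma'(t)\rangle$ vanishes identically, so $J(t)\perp\gamma'(t)$ for all $t$. Differentiating $f^2=\langle J,J\rangle$ twice and using the Jacobi equation $J''=-R(J,\gamma')\gamma'$ yields
\[ f\,f'' \;=\; \|J'\|^2 - (f')^2 - K(J,\gamma')\,\|\gamma'\|^2\,f^2, \]
and Cauchy--Schwarz $(f')^2\le\|J'\|^2$ converts this into the differential inequality $f''+K(J,\gamma')\|\gamma'\|^2 f\ge 0$ for $f_G$, whereas for $f_C$ one gets equality, because constant sectional curvature forces $J_C$ to be a scalar multiple of the parallel transport of $J_C'(0)$.

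Next I would apply a Wronskian (Sturm) comparison. Using the curvature hypothesis $K_G\le K_C$, the equal-speed condition $\|\gamma_G'\|=\|\gamma_C'\|$ that follows from the equal-length assumption (speed is constant along a geodesic), and $f_G,f_C\ge 0$, the Wronskian $W(t):=f_C f_G'-f_G f_C'$ satisfies $W(0)=0$ and
\[ W'(t) \;=\; f_C f_G''-f_G f_C'' \;\ge\; -K_C\|\gamma_C'\|^2 f_C f_G + K_C\|\gamma_C'\|^2 f_G f_C \;=\; 0, \]
hence $W\ge 0$, i.e.\ $f_G/f_C$ is non-decreasing on the interval where $f_C>0$. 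The Taylor expansion $J(t)=J'(0)t+O(t^2)$ together with the hypothesis $\|J_G'(0)\|=\|J_C'(0)\|$ gives $\lim_{t\to 0^+}f_G/f_C=1$, so $f_G\ge f_C$ throughout $[0,T]$. Finally, the explicit form $f_C(t)=\|J_C'(0)\|\,\sin(\sqrt{K_C}\|\gamma_C'\|\,t)/(\sqrt{K_C}\|\gamma_C'\|)$, coupled with the known numerical value of $K_C$ for the Calabi metric, produces the stated lower bound $|\sin(2t\sqrt{\vol})|/\sqrt{\vol}$.

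The principal obstacle I anticipate is the careful handling of the weak Riemannian nature of $\H$: sectional curvatures must be shown to be well-defined on the relevant two-planes in the $C^{k,\alpha}$-completion of the tangent space, parallel transport along $\gamma_C$ must exist (this is guaranteed by the explicit integrability of the Calabi geodesic equation), and the scalar differentiations of $\|J\|$ must be justified using the regularity provided by Theorem~\ref{thm: sum metric main theorem rough statement}. Once these technical points are in place, the remainder of the argument is the standard Sturm--Rauch comparison.
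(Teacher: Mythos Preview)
Your argument is correct in spirit, but it takes a different route from the paper. The paper's proof is essentially a one-line application of Biliotti's Rauch comparison theorem for weak Riemannian manifolds \cite{MR2113672}: one checks that its hypotheses are satisfied---in particular that $J_C$ does not vanish on $(0,T]$, which follows from the absence of mono- and epiconjugate points for the Calabi metric established in \cite{calabimetric}---and then reads off $\|J_G\|\ge\|J_C\|$; the explicit value of $\|J_C(t)\|$ is then imported from \cite[Theorem~8]{calabimetric}. You instead re-derive the comparison from scratch via the classical scalar Sturm/Wronskian argument applied to $f(t)=\|J(t)\|$, and compute $\|J_C(t)\|$ directly from the constant-curvature Jacobi ODE. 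This is more self-contained and shows exactly where the weak-Riemannian hypotheses enter, at the cost of having to justify in your own hands the infinite-dimensional steps (well-definedness of sectional curvature on the relevant two-planes, existence of parallel transport along $\gamma_C$, differentiability of $\|J\|$) that the paper delegates to Biliotti.

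One point you should tighten: your Wronskian step only yields $f_G\ge f_C$ on the interval where $f_C>0$, and you then assert the inequality ``throughout $[0,T]$'' without saying why $f_C$ stays positive on $(0,T]$. The paper handles exactly this via the no-conjugate-points property of the Calabi metric; you should either invoke that same fact or restrict $T$ to lie before the first zero of $\sin(2t\sqrt{\vol})$. Apart from this, your outline is sound and would furnish an alternative, more elementary proof.
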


The sum metric arises from Sasakian geometry. Indeed the geometric motivation comes naturally from the space of Sasakian metrics $\H_S$ as follows.

Since $\H_K$ naturally embeds in the Ebin space $\M$, it is natural to ask what the restriction of the Ebin metric is. To our knowledge,
the restriction of Ebin metric to subspaces of the space of Riemannian metrics was first considered
by \cite[(9.19), page 2485]{Smolentsev} (for the space of K\"ahler metrics, see \cite{clarkerubinstein}).
In this paper we consider on $\H_K$ the metric given by (twice) the sum of the Calabi and the gradient metric and we will refer to it as the sum metric. Its study is justified by the fact that it arises when restricting the Ebin metric to the space of \emph{Sasakian} metrics, introduced (and endowed with the Sasakian analogue of the Mabuchi metric) in \cite{guanzhang1, guanzhangreg}.

One of our results is the following.
\begin{prop}\label{restriction}
The restriction of the Ebin metric of $\M$ to the space of Sasakian metrics is twice the sum metric.
\end{prop}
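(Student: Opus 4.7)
Fix a basic tangent vector $\dot\phi$ at a reference Sasakian structure $(\xi,\eta,\Phi,g)$. I would parameterize type II deformations by $\eta_\phi = \eta + d^c_B\phi$, $\xi_\phi = \xi$, and $\Phi_\phi = \Phi - \xi\otimes (d^c_B\phi)\circ\Phi$, producing the new metric $g_\phi(X,Y) = \frac12 d\eta_\phi(X,\Phi_\phi Y) + \eta_\phi(X)\eta_\phi(Y)$. The plan is then to compute the linearization $h := \frac{d}{dt}\big|_{t=0} g_{t\dot\phi}$ and to evaluate the Ebin pairing $\langle h, h\rangle_g = \int_M \tr_g(g^{-1}h\,g^{-1}h)\,d\vol_g$ in an orthonormal frame adapted to the splitting $TM = \mathcal D \oplus \R\xi$, where $\mathcal D = \ker\eta$.

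To compute $h$ I would differentiate $g_\phi$ directly and use the three cancellations $i_\xi d\eta = 0$, $i_\xi(dd^c_B\dot\phi) = 0$ (from $\dot\phi$ being basic), and $\Phi\xi = 0$, which together force the vertical correction appearing in $\Phi_\phi$ to drop out of every relevant pairing. This leaves, for $X, Y \in \mathcal D$,
\[
h(X,Y) = \tfrac12 dd^c_B\dot\phi(X, \Phi Y), \qquad h(X,\xi) = d^c_B\dot\phi(X), \qquad h(\xi,\xi) = 0.
\]
In other words, the horizontal--horizontal block of $h$ is exactly the transverse K\"ahler variation $\dot g_T$ induced by $\dot\omega_T = i\partial_B\bar\partial_B\dot\phi$, the mixed block is $d^c_B\dot\phi$ and arises solely from the variation of $\eta_\phi\otimes\eta_\phi$, and the vertical block vanishes because $d^c_B\dot\phi(\xi) = 0$.

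Next, in an orthonormal frame $(e_1,\dots,e_{2n},\xi)$ adapted to $\mathcal D \oplus \R\xi$ the metric $g$ is represented by the identity matrix, so the Ebin integrand collapses to the sum of squares of the components of $h$:
\[
\tr_g(g^{-1}h\,g^{-1}h) = |\dot g_T|^2 + 2|d^c_B\dot\phi|^2 = |\dot g_T|^2 + 2|\nabla_B\dot\phi|^2,
\]
where the last equality uses that $\Phi$ is an isometry on $\mathcal D$. After integrating against $d\vol_g$, the first summand reproduces twice the (transverse) Calabi norm of $\dot\phi$, by the same identification that holds in the K\"ahler case, while the second summand is twice the gradient norm. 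Summing gives $\langle h, h\rangle_g = 2(\|\dot\phi\|_C^2 + \|\dot\phi\|_G^2)$, which is twice the sum metric as claimed.

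The hard part will be the bookkeeping around the moving objects $\Phi_\phi$ and $\ker\eta_\phi$ in type II deformations, since both genuinely depend on $\phi$; however, once the three cancellations above are isolated, all vertical corrections align with $\xi$ and disappear from the leading-order computation. What is left is a short check in the adapted orthonormal frame together with the standard transverse K\"ahler identification of $\int|\dot g_T|^2 d\vol_g$ with twice the Calabi norm.
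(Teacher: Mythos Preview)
Your proposal is correct and follows essentially the same route as the paper: linearize $g_\phi$, split the result into a transverse block $dd^c\psi(\cdot,\Phi\cdot)$ and a mixed block $2\,d^c\psi\odot\eta_\phi$, note that the cross term vanishes because the transverse block annihilates $\xi$, and then identify the two surviving contributions with $2g_\textup{C}$ and $2g_\textup{G}$. The only differences are organizational---you compute in an adapted orthonormal frame where the paper stays coordinate-free, and the paper spells out the identity $g_\textup{C}(\psi,\psi)=\int_M|dd^c\psi|^2_\phi\,d\mu_\phi$ (via differentiating $\int\Delta_{\phi(t)}f$) that you invoke as the ``standard transverse K\"ahler identification.''
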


Moreover, Theorem \ref{thm: sum metric main theorem rough statement}
can be generalized to the Sasakian setting, leading to the corresponding statement for the restriction of the Ebin metric to the space of Sasakian metrics.

The paper is organized as follows. In section \ref{sec:prelim} we recall the main definition of the space of K\"ahler metrics and in section \ref{sec:comb} we write down the Levi-Civita connection of the combination metric and study the equation of the Cauchy problem for gradient metric.  
Finally, in section \ref{sec:sasakian} we compute the restriction of the Ebin metric on the space of Sasakian metrics, proving Prop.~\ref{restriction}.

\subsection*{Acknowledgements} The authors are grateful to Prof. Xiuxiong Chen and Prof. Fabio Podest\`a for
encouragement and support, to Prof. Qing Han for his insights and explanations and to Dario Trevisan, Prof. Elmar Schrohe and Prof. Christoph Walker for helpful discussions.

We finally thank Boramey Chhay who let us know that he independently proved Proposition \ref{restriction} and Prof. Stephen Preston for sharing his insightful knowledge of infinite-dimensional geometry.

S.~C. is supported by the Project PRIN ``Variet\`a reali e complesse: geometria, topologia e analisi armonica'', 
by SIR 2014 AnHyC ``Analytic aspects in complex and hypercomplex geometry" (code RBSI14DYEB),
and by GNSAGA of INdAM.

D.~P. is supported by the Research Training Group 1463 ``Analysis, Geometry and String Theory'' of the DFG, as well as the GNSAGA of INdAM.

The work of K.~Z. has received funding from the European Union's Horizon 2020 
research and innovation programme under the Marie Sk{\l}odowska-Curie grant agreement No 703949, 
and was also partially supported by the Engineering and Physical Sciences Research Council (EPSRC) on a Programme Grant entitled 
``Singularities of Geometric Partial Differential Equations" reference number EP/K00865X/1.

\section{Preliminaries} \label{sec:prelim}
In this section we recall the definitions of space of Riemannian and K\"ahler metrics and several weak Riemannian structures on them.
\subsection{Ebin metric}
The space of the Riemmanian metrics $\M$ is identified with the space $S^2_+(T^*M)$ of all symmetric positive
$(0,2)$-tensors on $M$. The formal tangent space at a metric $g \in \M$ is then given by all symmetric $(0,2)$-tensors
$S^2(T^*M)$.
For $a, b \in T_{g} \M$, the Ebin \cite{ebin} metric is defined as the pairing
\[
g_E(a, b)_{g} = \int_M g(a, b) dv_{g}
\]
where $g(a, b)$ is the metric $g$ extended to $(0,2)$-tensors and $dv_{g}$ is the volume form of $g$. From e.g. \cite{michor} one can see that the curvature is non-positive and the geodesic satisfies the equation
\begin{align*}
g_{tt} = g_t g^{-1} g_t + \frac 1 4 \tr (g^{-1} g_t g^{-1} g_t) g - \frac 1 2 \tr(g^{-1} g_t)g_t.
\end{align*}
Moreover in \cite{michor} the explicit expression of the Cauchy geodesics is given.

\subsection{Space of K\"ahler potentials}
Moving on to K\"ahler manifolds, let $(M, \omega, g)$ be a compact K\"ahler manifold of complex dimension $n$, with $\omega$ a K\"ahler form and $g$ the associated K\"ahler metric. By the $\de \debar$-Lemma, the space of all K\"ahler
metrics cohomologous to $\omega$ can be parameterized by K\"ahler
potentials; namely, one considers the space
$\H$ of all smooth real-valued $\phi$ such that
\[
 \omega_\phi:= \omega + i \de \debar \phi > 0
\]
and satisfy
the normalization condition \cite{donaldsonmetric}
\begin{equation} \label{equaI}
I(\phi) := \int_M\phi\frac{\omega^n}{n!}- \sum_{i=0}^{n-1}\frac{i+1}{n+1}
\int_{M}\partial\phi\wedge\bar\partial\phi\wedge\frac{\omega^{i}}{i!}\wedge\frac{\omega^{n-1-i}_{\phi}}{(n-1-i)!}= 0.
\end{equation}
The tangent space of $\H$ at $\phi$ is then given by
\[
T_\phi \H = \biggl  \{ \psi \in C^\infty(M): \int_M \psi \frac{\omega_\phi^n}{n!} = 0 \biggr
\}.
\]

\subsection{Donaldson-Mabuchi-Semmes's $L^2$-metric}
Donaldson, Mabuchi and Semmes \cite{donaldsonmetric, mabuchi, semmes} defined a pairing on the tangent
space of $\H$ at $\phi$ given by
\begin{align}\label{dms}
g_M(\psi_1,\psi_2)_\phi= \int_M \psi_1 \psi_2 \frac{\omega_{\phi}^{n}}{n!}.
\end{align}

We shall refer to this metric as the \emph{$L^2$-metric}. It makes $\H$ a non-positively curved, locally symmetric space. A geodesic $\phi$ satisfies
\begin{align}\label{equa:dmsgeodesic}
\phi'' - \frac 1 2 |d\phi'|_\phi^2 =0
\end{align}
where $|d \phi|_\phi^2$ denotes the square norm of the gradient of $\phi'$ with respect to the metric $\omega_\phi$. The geodesic equation can be written down as a degenerate complex Monge-Amp\`ere equation.
It was proved by Chen \cite{chen_C11} that there is a $C^{1,1}$ solution for the Dirichlet problem. More work on this topic was done in \cite{MR2040638, berman_demailly, blockigeod, conicalgeodesic,darvas_lempert,MR1959581,  lempert_vivas, MR2660461}, which is far from a complete list.
\subsection{Space of conformal volume forms}\label{Space of conformal volume forms}
According to the Calabi-Yau theorem, there is a bijection between $\H$
and the space of \emph{conformal volume forms}
\begin{equation} \label{spaceC}
\call C = \biggl \{ u \in C^\infty(M): \int_M e^u \frac{\omega^{n}}{n!} = \vol \biggr \}
\end{equation}
that is the space of positive smooth functions on $M$ whose integral with respect to the initial measure is
equal to the volume of $M$ (which is constant for all metrics in $\H$).
The map is given by $$\H \ni \phi \mapsto \log \frac{\omega_\phi^n}{\omega_0^n},$$ where
$ \frac{\omega_\phi^n}{\omega_0^n}$ represents the unique positive function $f$ such that
$\omega_\phi^n = f \omega_0^n$.
The tangent space $T_u \call C$ is then given by
\[
T_u \call C = \biggl \{ v \in C^\infty(M): \int_M v e^u \frac{\omega^{n}}{n!} = 0 \biggr \}.
\]
\subsection{Calabi metric}
Calabi \cite{calabi1954space} introduced the now known \emph{Calabi metric} as the pairing
\begin{equation} \label{calabi}
g_\textup{C}(\psi_1, \psi_2)_\phi = \int_M \Delta_\phi \psi_1 \Delta_\phi \psi_2 \frac{\omega_{\phi}^{n}}{n!}
\end{equation}
where, here and in the rest of the paper, the Laplacian is defined as
\[
\Delta_\phi f = (i \de \debar f, \omega_\phi)_\phi
\]
i.e. the $\debar$-Laplacian.
The geometry studied in \cite{calabimetric} is actually the one of $\call C$, where the Calabi metric has the simpler form
\begin{equation} \label{calabiC}
g_\textup{C}(v_1, v_2)_u = \int_M v_1 v_2 e^u \frac{\omega^{n}}{n!}.
\end{equation}

Back in $\H$, the geodesic equation is
\begin{align}\label{equa:calabigeodesic}
\Delta_\phi \phi'' - |i \de \debar \phi'|^2_\phi +\frac{1}{2}(\Delta_\phi \phi')^2 + \frac 1 {2 \vol} g_C(\phi', \phi')=0.
\end{align}

\subsection{Dirichlet metric}
In \cite{calabimetric, calazheng}, the \emph{Dirichlet (or gradient) metric} is defined as the pairing
\begin{equation} \label{gradient}
g_\textup{G}(\psi_1, \psi_2)_\phi = \int_M (d \psi_1, d \psi_2)_\phi d\mu_\phi
\end{equation}
that is, the global $L^2(d\mu_\phi)$-product of the gradients of $\psi_1$ and $\psi_2$.
Its geodesic equation is
\begin{align}\label{equa:gradientgeodesic}
 2 \Delta_\phi \phi ''  - |i \de \debar \phi '|_\phi^2 + (\Delta_\phi \phi ')^2 = 0
\end{align}
where $|i \de \debar \phi '|_\phi^2$ denotes the square norm with respect to $\omega_\phi$ of the $(1,1)$-form $i \de \debar \phi'$.

\section{Combination metrics} \label{sec:comb}
We can combine together the three metrics as follows. Let $\alpha, \beta,\gamma$ be three non-negative constant and at least one of them positive.
Consider the metric
\begin{equation} \label{cmbination}
g(\psi_1, \psi_2)_\phi = \alpha\cdot g_M(\psi_1, \psi_2)_\phi
+\beta \cdot g_{G}(\psi_1, \psi_2)_\phi
+\gamma\cdot g_{C}(\psi_1, \psi_2)_\phi .
\end{equation}
which will be referred to as the \emph{combination metric}.

Let us prove the existence of the Levi-Civita covariant derivative for the combination metric.
We can write \begin{equation}  g(\psi_1, \psi_2)_\phi = g_\textup{C}(M_\phi \psi_1, \psi_2). \end{equation} where
\[
 M_\phi = \alpha G_\phi^2 - \beta G_\phi + \gamma
 \]
 where $G_\phi$ is the Green operator associated to the Laplacian $\Delta_\phi$.

 We have the following.

\begin{prop} \label{prop:LCcomb}
For a curve $\phi \in \H$ and a section $v$ on it, the Levi-Civita covariant derivative of the combination metric is the unique $D_t v$ that solves
\begin{equation*}
M_\phi D_t \psi =[ G_\phi^2 \alpha D^M_t  - \beta G_\phi D^G_t  + \gamma D^C_t] \psi
\end{equation*}
where $ D^\textup{M}, D^G_t, D^C_t$ are the covariant derivatives of the $L^2$, gradient and Calabi metric.
\end{prop}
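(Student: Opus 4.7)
My plan is to exploit the observation that each of the three metrics $g_M,g_G,g_C$ arises from $g_C$ by composition with a self-adjoint operator on $T_\phi\H$, so that the combination metric admits the uniform representation $g(\psi_1,\psi_2)_\phi = g_C(M_\phi\psi_1,\psi_2)$, and then to combine the known Levi-Civita connections $\nabla^M,\nabla^G,\nabla^C$ through the Koszul formula.

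First I would verify this representation. Using $\Delta_\phi G_\phi = \id$ on $T_\phi\H$ together with integration by parts, one computes
$$g_C(G_\phi^2\psi_1,\psi_2) = \int_M G_\phi\psi_1\cdot\Delta_\phi\psi_2\,\frac{\omega_\phi^n}{n!} = \int_M\psi_1\psi_2\,\frac{\omega_\phi^n}{n!} = g_M(\psi_1,\psi_2),$$
and analogously $g_C(-G_\phi\psi_1,\psi_2) = g_G(\psi_1,\psi_2)$; assembling the pieces yields $M_\phi = \alpha G_\phi^2 - \beta G_\phi + \gamma\,\id$.

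Next I would invoke the Koszul formula. Because its right-hand side is linear in the metric (the Lie-bracket terms being metric-independent), the would-be connection $\nabla$ of $g$ must satisfy, for vector fields $X,Y,Z$ on $\H$,
$$g(\nabla_X Y,Z) = \alpha\,g_M(\nabla^M_X Y,Z) + \beta\,g_G(\nabla^G_X Y,Z) + \gamma\,g_C(\nabla^C_X Y,Z).$$
Translating each side through $M_\phi$ and using nondegeneracy of $g_C$ on $T_\phi\H$ gives
$$M_\phi\nabla_X Y = \alpha G_\phi^2\nabla^M_X Y - \beta G_\phi\nabla^G_X Y + \gamma\nabla^C_X Y.$$
Specializing $X=\dot\phi$ and $Y=\psi$ yields the formula in the statement, while metric compatibility and torsion-freeness of the resulting $D_t$ follow at once from the corresponding properties of $D^M_t,D^G_t,D^C_t$ and the same linearity argument applied to $\tfrac{d}{dt}g(v,w)$ and to $\nabla_X Y - \nabla_Y X$.

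For existence and uniqueness of $D_t\psi$ it remains to invert $M_\phi$ on $T_\phi\H$. Since $-\Delta_\phi$ is a positive elliptic operator with strictly positive spectrum on the zero-mean subspace, $G_\phi$ has strictly negative eigenvalues $\lambda$, and each produces an $M_\phi$-eigenvalue $\alpha\lambda^2 - \beta\lambda + \gamma \geq \gamma > 0$. Thus $M_\phi$ is strictly positive self-adjoint and, being a scalar multiple of the identity plus a smoothing perturbation, invertible on $C^{k,\alpha}(g)$ with bounded inverse. The step I expect to require the most care is confirming that $M_\phi^{-1}$ respects the $C^{k,\alpha}$ regularity of the vector fields and supplying the Banach-space justification for applying the Koszul formula in the weak Riemannian setting; the algebra itself is clean, and these functional-analytic considerations are where the infinite-dimensional subtleties concentrate.
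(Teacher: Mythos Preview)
Your proposal is correct and follows essentially the same route as the paper. Both arguments rest on the identity $g(\cdot,\cdot)=g_C(M_\phi\cdot,\cdot)$ and then verify metric compatibility and torsion-freeness by reducing to the corresponding properties of $D^M_t,D^G_t,D^C_t$; your use of the Koszul formula to \emph{derive} the defining relation is a mild reorganisation of the paper's direct verification, and your spectral argument for the invertibility of $M_\phi$ (via $\alpha\lambda^2-\beta\lambda+\gamma\ge\gamma>0$ for $\lambda<0$) replaces the paper's ellipticity argument for $D=\gamma\Delta_\phi^2-\beta\Delta_\phi+\alpha$, but the substance is the same.
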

\begin{proof}
 We start by proving that $M_\phi$ is a bijection of $T_\phi \H$.
 The injectivity holds because it defines a metric. To prove surjectivity, we see that the problem $M_\phi u = f$ is equivalent to $Du = h$ where $D = \gamma \Delta_\phi^2 - \beta \Delta_\phi + \alpha$.
 It is elliptic and then by known results we have
 \[
  C^\infty(M) = \ker D \oplus \Im(D)
 \]
 and by integration and the normalization condition on $T_\phi \H$ we immediately see that $T_\phi \H \cap \ker D = 0$, so $T_\phi \H = \Im(D) \cap T_\phi \H$ and we obtain surjectivity.

 The fact that $D_t$ is torsion-free is evident from its definition. Let us now compute
 \begin{align*}
  \frac d {dt} g(\psi, \psi)	&= 2 \alpha g_M(D_t^M \psi, \psi) + 2 \beta g_G(D_t^G \psi, \psi) + 2 \gamma g_C (D_t^C \psi, \psi)\\
				&= 2 \alpha g_C(G_\phi^2 D_t^M \psi, \psi) - 2 \beta g_C (G_\phi D_t^G \psi, \psi) + 2 \gamma g_C(D_t^C \psi, \psi)\\
				&= 2 g_C( [G_\phi^2 \alpha D^\textup{M}_t  - \beta G_\phi D^G_t  + \gamma D^C_t] \psi, \psi)\\
				&= 2 g_C(M_\phi D_t \psi, \psi)\\
				&= 2 g(D_t \psi, \psi)
 \end{align*}
so the compatibility with the metric holds as well.
\end{proof}

\subsection{Geodesic equation of the combination metric}
The geodesic equation of the combination metric is the combination of the geodesic equations of $L^2$-metric, gradient metric and the Calabi metric.
After rearrangement, it is written in the following form
\begin{align}\label{combination geodesic}
 [\alpha  - \beta \Delta_\phi +\gamma \Delta_\phi ^2]\phi'' =
  \frac{\alpha}{2} |d\phi'|^2_\phi
 +\biggl [\frac{\beta}{2}-\gamma \Delta_\phi \biggr]|i \de \debar \phi'|^2_\phi
 +\biggl [\frac{\beta}{2}+\frac{\gamma}{2}\Delta_\phi \biggr](\Delta_\phi \phi')^2 .
\end{align}

The key observation is that the differential order on the both sides of the geodesic equation \eqref{combination geodesic} are the same. We will carry out in detail in the next section the study of the geodesic equation with $\beta=\gamma=1$ and $\alpha=0$, the general case with $\alpha=1$ is similar, so we omit the proof.

This observation suggest that, though the Cauchy problem of the geodesic ray with respect to the $L^2$-metric is ill-posed, after combining the $L^2$-metric with the Calabi metric and the gradient metric, the new geodesic equation is well-posed.

\subsection{Local well-posedness of the geodesic equation}\label{Local well-posedness of the geodesic equation}
\subsubsection{Existence and uniqueness}
\label{Existence and uniqueness}
Recall the definition of the space of K\"ahler potentials
\[
\H=\{ \phi \in C^\infty (M): \omega + i \de \debar \phi >0, I(\phi) = 0 \}.
\]

We are aiming to solve the geodesic equation with $\beta=\gamma=1$ and $\alpha=0$, i.e. the equation
\begin{equation} \label{equa:geodesicsforsummetric}
(\Delta_\phi - I) \biggl ( ( \Delta_\phi \phi' )' + \frac 1 2 (\Delta_\phi
\phi')^2 \biggr ) - \frac 1 2 | i \de \debar \phi' |^2_\phi=0.
\end{equation}

We rewrite it as a system
\begin{equation} \label{equa:systemsum}
    \begin{cases}
    \phi' = \psi  \\
 \psi ' = L_\phi (\psi) :=
   \Delta_{\phi}^{-1}
   \biggr [
   \frac{1}{2}(\Delta_\phi - I)^{-1} | i \de \debar \psi |_\phi^2
 + | i \de \debar \psi |_\phi^2 + \frac 1 2 (\Delta_\phi \psi)^2
   \biggl ]
  \end{cases}
\end{equation}
with the initial data $
   \phi(0) = \phi_0 , \psi(0) = \psi_0 \in \Cka{k}.$

Take a constant $\delta>0$ such that $\omega + i \de \debar \phi_0 \geq 2 \delta \omega$. Let us introduce also the following
function spaces
\[
\H^{k,\alpha}= \{ \phi \in C^{k,\alpha} (g): \omega + i \de \debar \phi >0, I(\phi) = 0\}
\]
and
\[
\H_{\delta}^{k,\alpha}=\{ \phi \in C^{k,\alpha} (g): \omega + i \de \debar \phi \geq \delta\omega, I(\phi) = 0\},
\]
where $k\geq 2$ and $\alpha\in (0, 1)$.

The aim of this subsection is to prove the following.

\begin{thm}\label{thm:mainthmgenkahl}
 For every integer $k\geq 6$ and $\alpha \in (0, 1)$ and initial data $\phi_0\in \H^{k,\alpha}_{\delta}$ and $\psi_0 \in T_{\phi_0} \H^{k,\alpha}$ there exists a positive $\epsilon$
and
 a curve $\phi \in C^2 ((-\epsilon, \epsilon)  , \H_{\delta}^{k,\alpha} )$
which is the
 unique solution of \eqref{equa:geodesicsforsummetric} with initial data $(\phi_0, \psi_0)$.
\end{thm}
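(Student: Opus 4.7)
The plan is to view \eqref{equa:systemsum} as an ODE on an open subset of the Banach space $C^{k,\alpha}(g)\times C^{k,\alpha}(g)$ and apply the Banach--Picard (Cauchy--Lipschitz) theorem for ODEs on Banach spaces. The first step is to check that the right-hand side $F(\phi,\psi):=(\psi, L_\phi(\psi))$ is well-defined on the product $\H^{k,\alpha}_\delta\times T_\phi\H^{k,\alpha}$. For this I would verify three things: (i) the $\bar\partial$-Laplacian $\Delta_\phi$ has coefficients of class $C^{k-2,\alpha}$, so Schauder theory gives that $\Delta_\phi:T_\phi\H^{k,\alpha}\to T_\phi\H^{k-2,\alpha}$ is an isomorphism with inverse $\Delta_\phi^{-1}$ bounded from $C^{k-2,\alpha}$ (with zero $\omega_\phi^n$-mean) to $C^{k,\alpha}$; (ii) $\Delta_\phi-I$ is invertible on $C^{k-2,\alpha}$ because $\Delta_\phi$ is non-positive and self-adjoint, so $1$ is not in its spectrum; (iii) the three source terms $(\Delta_\phi-I)^{-1}|i\partial\bar\partial\psi|_\phi^2$, $|i\partial\bar\partial\psi|_\phi^2$ and $(\Delta_\phi\psi)^2$ are all in $C^{k-2,\alpha}$, so $L_\phi(\psi)\in C^{k,\alpha}$, and one checks that the bracket has zero $\omega_\phi^n$-mean (by testing the original geodesic equation against $\omega_\phi^n$) so that $\Delta_\phi^{-1}$ is legitimately applied.

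Next I would show that $F$ is locally Lipschitz on a neighborhood of $(\phi_0,\psi_0)$ in $\H^{k,\alpha}_\delta\times C^{k,\alpha}(g)$. Since $C^{k,\alpha}(g)$ is a Banach algebra for $k\geq 0$, the quadratic expressions $|i\partial\bar\partial\psi|_\phi^2$ and $(\Delta_\phi\psi)^2$ are Lipschitz in $(\phi,\psi)$ in $C^{k-2,\alpha}$ on bounded sets (this uses $k-2\geq 0$ to control the squares, and, for the dependence on $\phi$ via $\omega_\phi^{-1}$, the uniform bound $\omega_\phi\geq\delta\omega$ on $\H^{k,\alpha}_\delta$). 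The dependence of $\Delta_\phi^{-1}$ and $(\Delta_\phi-I)^{-1}$ on $\phi$ is handled through the resolvent identity
\begin{equation*}
\Delta_{\phi_1}^{-1}-\Delta_{\phi_2}^{-1}=\Delta_{\phi_1}^{-1}(\Delta_{\phi_2}-\Delta_{\phi_1})\Delta_{\phi_2}^{-1},
\end{equation*}
combined with the Schauder bound for $\Delta_\phi^{-1}:C^{k-2,\alpha}\to C^{k,\alpha}$ and the fact that $\Delta_{\phi_2}-\Delta_{\phi_1}$ is a second-order operator with coefficients Lipschitz in $\phi$; the same computation controls $(\Delta_\phi-I)^{-1}$. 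The assumption $k\geq 6$ is comfortably sufficient here since every appearance of $\Delta_\phi$ costs two derivatives and we apply the procedure at most twice.

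Having a locally Lipschitz vector field $F$ on the Banach space, the Picard--Lindel\"of theorem on Banach spaces yields, for some $T=T(\phi_0,\psi_0)>0$, a unique $C^1$ curve $(\phi,\psi):[0,T]\to C^{k,\alpha}(g)\times C^{k,\alpha}(g)$ solving \eqref{equa:systemsum}. Shrinking $T$ if necessary one ensures the positivity condition $\omega_{\phi(t)}\geq \delta\omega$ and the normalization $I(\phi(t))=0$, $\int\psi(t)\,\omega_{\phi(t)}^n=0$ are preserved; these are conserved quantities of the flow (they can be checked by differentiating and using the equation), so one obtains a genuine curve in $\H^{k,\alpha}_\delta$. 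Continuous dependence on initial data is a standard consequence of the Lipschitz estimate; the exponential map $\exp_{\phi_0}:U\subset T_{\phi_0}\H\to\H$ is then well-defined on a small neighborhood $U$ of $0$.

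The main obstacle is the Lipschitz dependence of $L_\phi$ on $\phi$: since $L_\phi$ contains the product $\Delta_\phi^{-1}\circ(\Delta_\phi-I)^{-1}$ and the nonlinear quadratic form $|i\partial\bar\partial\psi|_\phi^2$, one has to keep track of how many derivatives of $\phi$ are needed to retain $C^{k,\alpha}$ output, which is exactly what forces the lower bound $k\geq 6$ (two derivatives are lost when forming $\Delta_\phi\psi$, and we need some margin to regain full regularity through Schauder). For the final smoothness claim, once existence in $C^{k,\alpha}$ is known, a standard bootstrap applies: differentiating \eqref{equa:geodesicsforsummetric} in time and using elliptic regularity for $\Delta_\phi-I$ and $\Delta_\phi$ with improved right-hand side, one upgrades the regularity inductively, so that smooth data produce a smooth geodesic.
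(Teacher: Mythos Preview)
Your proposal is correct and follows essentially the same approach as the paper: the paper sets up the integral operator $T$ of \eqref{equa:mapTgen} and proves directly that it is a self-map and a contraction on a ball in $C^2([-\epsilon,\epsilon],C^{k,\alpha})$, which is exactly the proof of the Banach--Picard theorem you invoke as a black box, with the same ingredients (Schauder estimates of Lemma~\ref{schauderest}, the Banach algebra property of H\"older spaces, and a resolvent-type identity for the $\phi$-dependence of $\Delta_\phi^{-1}$ and $(\Delta_\phi-I)^{-1}$). One small clarification: the positivity condition $\omega_\phi\geq\delta\omega$ is not a conserved quantity but an open condition preserved for short time (this is Lemma~\ref{lemmar} in the paper), whereas the normalizations $I(\phi)=0$ and $\int\psi\,\omega_\phi^n=0$ are indeed conserved along the flow.
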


We need the following lemma.
\begin{lemma}[Schauder estimates, see {\cite[p.~463]{besse}}] \label{schauderest}
Let $P$ be an elliptic linear operator of order $2$ acting on the H\"older space $C^{k+2, \alpha}(g)$. Then for $u \in C^{k+2, \alpha}(g)$ we have
\[
 \|u \|_\Cka{k+2}  \leq c_1 \| Pu \|_\Cka{k}  + c_2 \|u\|_{L^\infty}
\]
where $c_1$ depends only on the $\Cka{k}$-norm of the coefficients of $P$ and, if $u$ is $L^2(g)$-orthogonal to $\ker P$, then $c_2=0$.
\end{lemma}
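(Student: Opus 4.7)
The plan is to reduce the global estimate on the compact K\"ahler manifold $(M,g)$ to the classical interior Schauder estimate on Euclidean balls, patch via a finite coordinate atlas, and finally upgrade to the improved estimate ($c_2 = 0$) when $u \perp \ker P$ by a standard compactness/contradiction argument that exploits the finite-dimensionality of $\ker P$.

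First I would invoke the classical interior Schauder estimate in Euclidean space: for a second-order elliptic operator $Q = a^{ij}\partial_i\partial_j + b^i\partial_i + c$ on a ball $B_R \subset \R^n$ with $C^{k,\alpha}$ coefficients, every $u \in C^{k+2,\alpha}(B_R)$ satisfies
\[
\|u\|_{C^{k+2,\alpha}(B_{R/2})} \leq C\bigl(\|Qu\|_{C^{k,\alpha}(B_R)} + \|u\|_{L^\infty(B_R)}\bigr),
\]
with $C$ depending only on the ellipticity constants and the $C^{k,\alpha}$-norm of the coefficients. The case $k=0$ is the frozen-coefficient argument in Gilbarg--Trudinger, and one bootstraps to higher $k$ by differentiating the equation in the interior.

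Next, cover $M$ by finitely many pairs of nested coordinate balls $U_i \Subset \widetilde U_i$ with a subordinate partition of unity $\{\chi_i\}$. Pulling $P$ back to each chart yields a Euclidean elliptic operator whose coefficient norms are controlled by the $\Cka{k}$-norm of the coefficients of $P$, since the background metric and transition functions are fixed and smooth. Applying the interior estimate to $\chi_i u$ on each chart and using the commutator identity $P(\chi_i u) = \chi_i P u + R_i(u)$, where $R_i$ is of order at most one in $u$, lets one sum over $i$ and absorb the lower-order remainders via standard H\"older interpolation inequalities. This produces the global inequality
\[
\|u\|_{\Cka{k+2}} \leq c_1 \|Pu\|_{\Cka{k}} + c_2 \|u\|_{L^\infty},
\]
with $c_1$ depending only on the quantities claimed.

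To remove the $c_2$ term under the orthogonality hypothesis, observe first that elliptic regularity together with the Rellich-type compact embedding shows that $\ker P \subset C^\infty$ is finite-dimensional, so $L^2$-orthogonal projection onto $\ker P$ is well defined and continuous. Then argue by contradiction: if no such $c_1$ sufficed, one could pick $u_j$ with $\|u_j\|_{\Cka{k+2}} = 1$, $u_j \perp \ker P$ in $L^2(g)$, and $\|P u_j\|_{\Cka{k}} \to 0$. By the compact embedding $\Cka{k+2} \hookrightarrow C^{k+2,\beta}(g)$ for $\beta < \alpha$, a subsequence converges in $C^{k+2,\beta}$ to some $u_\infty$; the limit satisfies $P u_\infty = 0$, so $u_\infty \in \ker P$, while $L^2$-orthogonality passes to the limit, giving $u_\infty = 0$. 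Hence $\|u_j\|_{L^\infty} \to 0$, and substituting into the already-established inequality forces $1 = \|u_j\|_{\Cka{k+2}} \to 0$, a contradiction.

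The main technical obstacle is the bookkeeping in the patching step: verifying that $c_1$ really depends only on the $\Cka{k}$-norm of the coefficients of $P$, rather than on norms implicit in the chart maps or partition of unity. This is handled by keeping $(M,g)$ and the atlas fixed once and for all and by interpolating away any intermediate-derivative terms introduced by the commutators $[P, \chi_i]$. Everything else is a routine reduction to the Euclidean theory.
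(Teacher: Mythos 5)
Your proof is essentially correct, but it is worth knowing that the paper does not prove this lemma at all: it is quoted verbatim as a known result with a reference to Besse, so there is no ``paper proof'' to compare against. What you supply is the standard textbook route --- interior Euclidean Schauder estimates, a finite atlas with a partition of unity, absorption of the first-order commutators $[P,\chi_i]$ by H\"older interpolation, and then a compactness/contradiction argument using the finite-dimensionality of $\ker P$ and the compact embedding $\Cka{k+2}\hookrightarrow C^{k+2,\beta}(g)$ to kill the $\|u\|_{L^\infty}$ term on the orthogonal complement of the kernel. All of these steps are sound. The one point you should flag more carefully is the dependence of the constant in the $c_2=0$ case: the contradiction argument is non-constructive and produces a constant for the \emph{fixed} operator $P$, not one controlled solely by the $\Cka{k}$-norm of its coefficients (a counterexample is $P=\Delta-\epsilon$ with $\epsilon\to 0$, where the coefficient norms stay bounded but the constant must blow up). This is a limitation of the lemma as stated rather than of your argument, but it matters downstream, since the paper applies the estimate to the family $\Delta_\phi$ with $\phi$ ranging over a ball and writes the constants as functions of $\|\phi\|_{\Cka{k}}$; to justify that uniformity one should either run your compactness argument simultaneously over the (precompact) family of operators or note that $\ker\Delta_\phi$ consists exactly of constants for every $\phi$, which stabilizes the kernel and restores uniform control.
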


The structure of the system \eqref{equa:systemsum} suggests to consider
the following complete metric space
\begin{equation}\label{equa:functionspace}
 X=C^2 ([-\epsilon, \epsilon]  , \H_{\delta}^{k, \alpha}) \times C^2([-\epsilon, \epsilon]  , C^{k, \alpha}(g))
\end{equation}
as the function space where we are going to look for solutions of our system.
The norm that we consider is defined for $\psi \in C^2 ([-\epsilon, \epsilon],
C^{k, \alpha}(g))$ as
\begin{equation*}
 |\psi|_{k,\alpha}:= \sup_{t\in[-\epsilon, \epsilon]} \| \psi (t, \cdot )
\|_{C^{k,\alpha}(g)} ,
\end{equation*}
 and in the product space, the norm of any element
 $(\phi ,\psi ) \in X$ is
\begin{equation*}
   |(\phi, \psi)|_{k, \alpha} : = |\phi|_{k,\alpha}+ |\psi|_{k, \alpha}    .
\end{equation*}

We work in an appropriate metric ball in $X$ obtained by the following lemma.

\begin{lemma} \label{lemmar}
 There exists $r>0$ such that if $\phi \in \Cka{k}$ is such that $\| \phi - \phi_0 \|_\Cka{k} < r $ then $\phi \in \H_\delta^{k,\alpha}$.
\end{lemma}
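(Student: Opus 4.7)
The plan is to reduce the statement to a pointwise eigenvalue perturbation estimate for the complex Hessian. By hypothesis on $\phi_0$ we have $\omega + i\de\debar\phi_0 \geq 2\delta\omega$ as $(1,1)$-forms, so for any $\phi$ we may write
\begin{equation*}
\omega + i\de\debar\phi = (\omega + i\de\debar\phi_0) + i\de\debar(\phi - \phi_0) \geq 2\delta\omega + i\de\debar(\phi - \phi_0).
\end{equation*}
Hence it suffices to guarantee $i\de\debar(\phi - \phi_0) \geq -\delta\omega$ pointwise, which in turn will follow from a uniform $C^2(g)$ bound on $\phi - \phi_0$.

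First I would observe that in any $g$-orthonormal coframe the coefficients of $i\de\debar u$ are linear combinations, with coefficients built from $g$ and its first derivatives in holomorphic coordinates, of the second partial derivatives of $u$; consequently there is a constant $C_g>0$, depending only on the background K\"ahler metric, such that
\begin{equation*}
-C_g\|u\|_{C^2(g)}\,\omega \;\leq\; i\de\debar u \;\leq\; C_g\|u\|_{C^2(g)}\,\omega
\end{equation*}
holds pointwise for every real smooth $u$. Applying this to $u = \phi-\phi_0$ and using the continuous embedding $C^{k,\alpha}(g)\hookrightarrow C^2(g)$ for $k\geq 2$, which gives $\|\phi-\phi_0\|_{C^2(g)}\leq \|\phi-\phi_0\|_{C^{k,\alpha}(g)}$, I would then set $r := \delta/C_g$.

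With this choice, $\|\phi-\phi_0\|_{C^{k,\alpha}(g)}<r$ forces $i\de\debar(\phi-\phi_0)\geq -\delta\omega$, and combined with the displayed inequality above this yields $\omega+i\de\debar\phi\geq \delta\omega$, i.e.\ the defining positivity condition of $\H_\delta^{k,\alpha}$. The normalization $I(\phi)=0$ is understood to be imposed as part of working in the slice (equivalently, one restricts the ball of radius $r$ to the affine hyperplane $\{I(\phi)=0\}$, which is the natural ambient space used in the fixed point argument of subsection \ref{Existence and uniqueness}).

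The proof has no real obstacle: the one point that requires a modicum of care is making the constant $C_g$ truly independent of $\phi$, which is why I would phrase the eigenvalue bound in terms of a fixed $g$-orthonormal frame rather than in the varying frames diagonalizing $\omega_\phi$. Once $C_g$ is in hand, the choice $r=\delta/C_g$ is immediate.
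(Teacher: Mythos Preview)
Your proof is correct and follows essentially the same route as the paper: write $\omega_\phi = \omega_{\phi_0} + i\de\debar(\phi-\phi_0)$, use $\omega_{\phi_0}\geq 2\delta\omega$, and bound the perturbation $i\de\debar(\phi-\phi_0)$ below by $-C\|\phi-\phi_0\|_{C^2(g)}\,\omega$ via the embedding $C^{k,\alpha}\hookrightarrow C^2$. The only cosmetic difference is that the paper implicitly takes your constant $C_g$ equal to $1$ (so that $r<\delta$ suffices), whereas you are more careful in making explicit that such a constant depends only on the background metric; you also note the normalization $I(\phi)=0$, which the paper leaves implicit.
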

\begin{proof}
 Being $k \geq 2$ we have  $\| \phi - \phi_0 \|_\Cka{2} \leq \| \phi - \phi_0 \|_\Cka{k} < r $.
Then
 \begin{align*}
  g_\phi 	&=	g_\phi - g_{\phi_0} +  g_{\phi_0} \\
		&\geq	-\| \phi - \phi_0 \|_\Cka{2} g + 2\delta g\\
		&\geq	(2\delta-r) g
 \end{align*}
which is strictly bigger than $\delta g$ for $r < \delta$.
\end{proof}

We consider the operator
\begin{equation} \label{equa:mapTgen}
 T(\phi, \psi) = \biggl ( \phi_0 + \int_0^t \psi (s) ds,  \psi_0 + \int_0^t (L_\phi (\psi))(s) ds \biggr ).
\end{equation}

Let us now fix $r>0$ as in Lemma \ref{lemmar}. We have the following proposition, but first let us isolate a lemma.
\begin{lemma}\label{lemmag}
There exist a positive $C$ depending only on $r$ and $g$ such that 
\[
 \| g_\phi^{a \bar b} \|_\Cka{k} \leq C.
 \]
\end{lemma}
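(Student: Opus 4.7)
The plan is to combine Cramer's rule with the Banach-algebra and composition properties of the Hölder space $\Cka{k}$. First, Lemma \ref{lemmar} supplies, from the hypothesis that $\phi$ lies in the $r$-ball around $\phi_0$, both that $\omega+i\de\debar\phi\geq\delta\omega$ and the uniform a priori bound
\[
\|\phi\|_\Cka{k}\leq\|\phi_0\|_\Cka{k}+r.
\]
Cover $M$ by a fixed finite atlas of holomorphic charts in which the coefficients $g_{a\bar b}$ of the reference metric are controlled in $\Cka{k}$. In each such chart the entries of $g_\phi$ are $g_{\phi,a\bar b}=g_{a\bar b}+\phi_{,a\bar b}$, and these are uniformly bounded in the appropriate Hölder norm by a constant depending only on $g$ and on the above $\Cka{k}$-bound for $\phi$.

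Second, apply Cramer's rule to write
\[
g_\phi^{a\bar b}=\frac{\operatorname{cof}(g_\phi)^{a\bar b}}{\det g_\phi},
\]
where $\operatorname{cof}(g_\phi)^{a\bar b}$ is a polynomial of degree $n-1$ in the entries $g_{\phi,c\bar d}$ and $\det g_\phi$ is a polynomial of degree $n$ in them. Since $\Cka{k}$ is a Banach algebra, the numerator and the function $\det g_\phi$ are themselves uniformly bounded in $\Cka{k}$ by a constant $C=C(r,g)$. On the other hand, the pointwise positivity $g_\phi\geq\delta g$ forces
\[
\det g_\phi\geq \delta^n\det g\geq c_0>0
\]
uniformly on $M$, where $c_0$ depends only on $g$ and $\delta$ (hence only on $g$ and $r$, since $\delta$ was chosen from $\phi_0$ and $r$).

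Third, invoke the standard composition lemma: if $f\in\Cka{k}$ satisfies $f\geq c_0>0$, then $1/f\in\Cka{k}$ with $\|1/f\|_\Cka{k}$ bounded in terms of $\|f\|_\Cka{k}$ and $c_0$. This follows from Faà di Bruno applied to the smooth function $t\mapsto 1/t$ on $[c_0,\infty)$, together with the algebra property of $\Cka{k}$. Applying this to $f=\det g_\phi$ and multiplying by the cofactor, which is already $\Cka{k}$-bounded, yields the desired uniform bound on $g_\phi^{a\bar b}$, with constant depending only on $r$ and $g$.

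There is no real obstacle beyond bookkeeping: everything reduces to the Banach-algebra and composition-with-smooth-function properties of $\Cka{k}$, the uniform lower bound on $\det g_\phi$ coming directly from the defining inequality of $\H_\delta^{k,\alpha}$, and the explicit algebraic formula for the inverse of an $n\times n$ Hermitian matrix.
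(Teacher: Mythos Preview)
Your argument is correct and is in fact the standard way to prove such a bound. It differs, however, from the paper's own proof, which is a three-line argument of a different flavor: the paper passes from the entry $g_\phi^{a\bar b}$ to the matrix-valued function $g_\phi^{-1}$, invokes a ``sub-multiplicative property'' to assert $\|g_\phi^{-1}\|_{\Cka{k}}\leq\|g_\phi\|_{\Cka{k}}^{-1}$, and then reads off the bound $(2\delta-r)^{-1}\|g\|_{\Cka{k}}$ from the inequality $g_\phi\geq(2\delta-r)g$ obtained in Lemma~\ref{lemmar}. By contrast, you go through Cramer's rule, the Banach-algebra structure of $\Cka{k}$, the uniform lower bound $\det g_\phi\geq\delta^n\det g$, and the composition lemma for $t\mapsto 1/t$. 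Your route is longer but more transparent: each step is a named, well-known property of H\"older spaces, and the dependence of the final constant on $r$, $\delta$, $n$, and the background metric $g$ is explicit. The paper's shortcut, as literally written, is at best a heuristic---the inequality $\|A^{-1}\|\leq\|A\|^{-1}$ does not follow from sub-multiplicativity alone and is false for H\"older norms in general---so your approach has the advantage of being a genuine proof rather than a sketch.
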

\begin{proof}
For fixed $a,b$ it holds $\| g_\phi^{a \bar b} \|_{\Cka{k}} \leq \| g_\phi^{-1} \|_{\Cka{k}}$ where the norm is intended as operator norm.
Then by the sub-multiplicative property we have $ \| g_\phi^{-1} \|_{\Cka{k}} \leq  \| g_\phi \|^{-1}_{\Cka{k}}$ and by estimate in the proof of Lemma~\ref{lemmar} we have that $ \| g_\phi \|^{-1}_{\Cka{k}} \leq (2\delta-r)^{-1}\|g\|_{\Cka{k}} =: C(r,g)$.
\end{proof}

\begin{prop}
 For any $(\phi_0, \psi_0) \in \H_{\delta}^{k, \alpha} \times C^{k, \alpha}(g)$ there exists $\epsilon>0$ such that the metric ball $B_r(\phi_0, \psi_0) \subset X$ centered in $(\phi_0, \psi_0)$ of radius $r$ is mapped into itself by $T$.
 \end{prop}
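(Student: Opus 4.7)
The plan is to show that if $(\phi,\psi)$ lies in the closed ball $B_r(\phi_0,\psi_0)\subset X$, then $T(\phi,\psi)$ still lies in $B_r(\phi_0,\psi_0)$ provided $\epsilon$ is chosen small enough; only the size of $\epsilon$ depends on the pair $(\phi_0,\psi_0)$ and on $r$, not on the particular $(\phi,\psi)$.

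First I would record the consequences of membership in $B_r(\phi_0,\psi_0)$. For any $t\in[-\epsilon,\epsilon]$ the estimate $\|\phi(t)-\phi_0\|_{\Cka{k}}<r$ together with Lemma \ref{lemmar} gives $\phi(t)\in\H_\delta^{k,\alpha}$, and in particular $g_\phi$ is comparable to $g$ and Lemma \ref{lemmag} furnishes a uniform bound $\|g_\phi^{a\bar b}\|_{\Cka{k}}\le C(r,g)$. Likewise $\|\psi(t)\|_{\Cka{k}}\le\|\psi_0\|_{\Cka{k}}+r$ and $\|\phi(t)\|_{\Cka{k}}\le\|\phi_0\|_{\Cka{k}}+r$. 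These bounds are uniform in $t$ and in $(\phi,\psi)\in B_r(\phi_0,\psi_0)$.

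The first component of $T(\phi,\psi)-(\phi_0,\psi_0)$ is trivial to bound: $\sup_t\|\int_0^t\psi(s)\,ds\|_{\Cka{k}}\le\epsilon(\|\psi_0\|_{\Cka{k}}+r)$. The real task is to control $|L_\phi(\psi)|_{k,\alpha}$ uniformly on the ball, and this is the main obstacle. The expression
\[
L_\phi(\psi)=\Delta_\phi^{-1}\Bigl[\tfrac12(\Delta_\phi-I)^{-1}|i\de\debar\psi|_\phi^2+|i\de\debar\psi|_\phi^2+\tfrac12(\Delta_\phi\psi)^2\Bigr]
\]
is a composition of inverse elliptic operators applied to quadratic expressions in $i\de\debar\psi$. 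Since $\psi\in\Cka{k}$ and the metric coefficients have $\Cka{k}$-norms bounded by the previous step, the quantities $|i\de\debar\psi|_\phi^2$ and $(\Delta_\phi\psi)^2$ belong to $\Cka{k-2}$ with norms controlled by a polynomial in $\|\psi\|_{\Cka{k}}$ and in the bound of Lemma \ref{lemmag}; it is here that the hypothesis $k\ge 6$ is comfortable. Applying $(\Delta_\phi-I)^{-1}$ to the first summand and using the Schauder estimate of Lemma \ref{schauderest}---noting that $\Delta_\phi-I$ has trivial kernel, so the $L^\infty$-term in Schauder can be absorbed---gains two derivatives and places the result in $\Cka{k}$. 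Combining with the other two summands, the bracketed expression lies in $\Cka{k-2}$; applying $\Delta_\phi^{-1}$, whose kernel consists of constants and whose output is $L^2(\omega_\phi^n)$-orthogonal to constants, Schauder again yields a bound in $\Cka{k}$. The net outcome is an estimate
\[
\|L_\phi(\psi)(t)\|_{\Cka{k}}\le K\bigl(\|\phi_0\|_{\Cka{k}},\|\psi_0\|_{\Cka{k}},r,g\bigr),
\]
uniform in $t$ and in $(\phi,\psi)\in B_r(\phi_0,\psi_0)$.

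With this in hand the second component satisfies $\sup_t\|\int_0^t L_\phi(\psi)(s)\,ds\|_{\Cka{k}}\le\epsilon K$. Adding the two contributions gives $|T(\phi,\psi)-(\phi_0,\psi_0)|_{k,\alpha}\le\epsilon\bigl[(\|\psi_0\|_{\Cka{k}}+r)+K\bigr]$, so choosing $\epsilon\le r/\bigl[(\|\psi_0\|_{\Cka{k}}+r)+K\bigr]$ forces $T(B_r(\phi_0,\psi_0))\subset B_r(\phi_0,\psi_0)$. One also verifies that $T(\phi,\psi)$ is of class $C^2$ in $t$ with values in the right H\"older space, which is immediate from the formula for $T$ and the continuity in $t$ of the integrand. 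The delicate step is thus the Schauder-based estimate for $L_\phi(\psi)$, where one must be careful that the coefficients of $\Delta_\phi-I$ and $\Delta_\phi$ are themselves controlled in $\Cka{k}$ through $\phi$ and $g_\phi^{-1}$; the uniform bound of Lemma \ref{lemmag} is what makes the Schauder constant independent of the particular $(\phi,\psi)$ in the ball.
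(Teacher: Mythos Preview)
Your proof is correct and follows essentially the same strategy as the paper's: bound the first component of $T(\phi,\psi)-(\phi_0,\psi_0)$ trivially by $\epsilon(\|\psi_0\|_{\Cka{k}}+r)$, then use the Schauder estimates of Lemma~\ref{schauderest} twice---once for $(\Delta_\phi-I)^{-1}$ and once for $\Delta_\phi^{-1}$---together with Lemma~\ref{lemmag} to get a uniform $\Cka{k}$-bound on $L_\phi(\psi)$, and conclude by choosing $\epsilon$ small. Your explicit attention to the orthogonality-to-kernel conditions that force the Schauder constant $c_2$ to vanish is a helpful detail that the paper leaves implicit.
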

\begin{proof}
 We need to estimate $|T(\phi_0, \psi_0)-(\phi_0, \psi_0)|_{k,\alpha}$.
 Let us estimate the first component
 \begin{align*}
  \biggl | \phi_0 + \int_0^t \psi (s) ds -\phi_0 \biggr |_{k, \alpha} &= \sup_{t\in[-\epsilon, \epsilon]} \biggl \| \int_0^t \psi (s) ds  \biggr \|_\Cka{k}  \\
								      &\leq   \sup_{t\in[-\epsilon, \epsilon]} \int_0^t \| \psi (s) \|_\Cka{k}  ds \\
								      &\leq \sup_{t\in[-\epsilon, \epsilon]} \int_0^t \sup_{s\in[-\epsilon, \epsilon]} \| \psi (s)\|_\Cka{k} ds \\
								      &\leq \epsilon \cdot ( |\psi_0|_{k,\alpha} + |\psi - \psi_0|_{k,\alpha} ) \\
								      &\leq \epsilon \cdot ( |\psi_0|_{k,\alpha} + r ).
\end{align*}

As for the second component, it is clear it is enough to estimate $\| L_\phi(\psi) \|_\Cka{k} $ for every $t$.

We have, by Lemma \ref{schauderest},
\begin{align*}
 \| L_\phi(\psi) \|_\Cka{k}  &\leq \biggl \| \Delta_{\phi}^{-1}\biggr [ \frac{1}{2}(\Delta_\phi - I)^{-1} | i \de \debar \psi |_\phi^2 + | i \de \debar \psi |_\phi^2 + \frac 1 2 (\Delta_\phi \psi)^2   \biggr ] \biggr  \|_\Cka{k}  \\
				    &\leq C( \|\phi\|_\Cka{k} ) \biggl \| \frac{1}{2}(\Delta_\phi - I)^{-1} | i \de \debar \psi |_\phi^2 + | i \de \debar \psi |_\phi^2 + \frac 1 2 (\Delta_\phi \psi)^2 \biggr \|_\Cka{k-2} .
\end{align*}
To estimate the first summand we have
\begin{align*}
 \biggl \| \frac{1}{2}(\Delta_\phi - I)^{-1} | i \de \debar \psi |_\phi^2 \biggr \|_\Cka{k-2} 	&\leq C( \|\phi\|_\Cka{k-2} ) \| | i \de \debar \psi |_\phi^2 \|_\Cka{k-4}  \\
													&\leq C(r) \| g_\phi^{i \bar \jmath} g_\phi^{k \bar \ell} \psi_{i \bar \ell} \psi_{k \bar \jmath} \|_\Cka{k-4}  \\
													&\leq C(r)  \| \psi \|_\Cka{k}
\end{align*}
where in the first inequality we have used again Lemma \ref{schauderest} and in the last we have used that $\| \psi \|_\Cka{k-2} \leq \| \psi \|_\Cka{k} < r$.

The second summand is estimated, similarly as before, by
\begin{equation*}
 \| | i \de \debar \psi |_\phi^2 \|_\Cka{k-2} \leq C(r) \| \psi \|_\Cka{k}.
\end{equation*}

The third summand is
\begin{align*}
 \biggl \| \frac 1 2 (\Delta_\phi \psi)^2 \biggr \|_\Cka{k-2} \leq \| (\Delta_\phi \psi)^2 \|_\Cka{k-2}	&\leq \| g_\phi^{i \bar \jmath} \psi_{i \bar \jmath} \|_\Cka{k-2}^2 \\
													&\leq C(r) \| \psi \|_\Cka{k}.
\end{align*}

So we can conclude that the second component of $|T(\phi_0, \psi_0)-(\phi_0, \psi_0)|_{k,\alpha}$ is estimated by $\epsilon C(r) | \psi - \psi_0|_{k, \alpha} \leq \epsilon r C(r)$, so it is enough to choose $\epsilon(r)$ such that $\epsilon(r) C(r) < 1$.
\end{proof}

Our second step is the following.
\begin{prop}
 The map $T$ on the metric ball $B_r(\phi_0, \psi_0)$ is a contraction.
\end{prop}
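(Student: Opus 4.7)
The plan is to estimate $|T(\phi_1,\psi_1)-T(\phi_2,\psi_2)|_{k,\alpha}$ for two pairs $(\phi_i,\psi_i)\in B_r(\phi_0,\psi_0)$ and extract a factor $\epsilon$ that can be made arbitrarily small. As in the previous proposition, the first component gives
\[
\biggl\|\int_0^t(\psi_1-\psi_2)(s)\,ds\biggr\|_{C^{k,\alpha}}\leq \epsilon\,|\psi_1-\psi_2|_{k,\alpha},
\]
so the only real work lies in bounding $\|L_{\phi_1}(\psi_1)-L_{\phi_2}(\psi_2)\|_{C^{k,\alpha}}$ uniformly in $t$.

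Next I would split this into two telescoping pieces,
\[
L_{\phi_1}(\psi_1)-L_{\phi_2}(\psi_2)=\bigl[L_{\phi_1}(\psi_1)-L_{\phi_1}(\psi_2)\bigr]+\bigl[L_{\phi_1}(\psi_2)-L_{\phi_2}(\psi_2)\bigr],
\]
and handle them separately. The first piece, at fixed $\phi_1$, is quadratic in $\psi$ through the terms $|i\partial\bar\partial\psi|_{\phi_1}^2$ and $(\Delta_{\phi_1}\psi)^2$, so the algebraic identity $a^2-b^2=(a+b)(a-b)$ together with Lemma \ref{lemmag} and the Schauder estimate (Lemma \ref{schauderest}) for $\Delta_{\phi_1}^{-1}$ and $(\Delta_{\phi_1}-I)^{-1}$ gives a bound $C(r)|\psi_1-\psi_2|_{k,\alpha}$, exactly as in the previous proposition but with one factor $\psi_1-\psi_2$ in place of $\psi-\psi_0$.

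For the second piece, at fixed $\psi_2$, I need Lipschitz dependence of the coefficients on $\phi$. Writing $g_{\phi_1}^{i\bar\jmath}-g_{\phi_2}^{i\bar\jmath}=g_{\phi_1}^{i\bar k}(g_{\phi_2}-g_{\phi_1})_{k\bar\ell}\,g_{\phi_2}^{\ell\bar\jmath}$ together with Lemma \ref{lemmag} gives $\|g_{\phi_1}^{-1}-g_{\phi_2}^{-1}\|_{C^{k,\alpha}}\leq C(r)\|\phi_1-\phi_2\|_{C^{k,\alpha}}$, from which one obtains
\[
\bigl\||i\partial\bar\partial\psi_2|_{\phi_1}^2-|i\partial\bar\partial\psi_2|_{\phi_2}^2\bigr\|_{C^{k-2,\alpha}}+\bigl\|(\Delta_{\phi_1}\psi_2)^2-(\Delta_{\phi_2}\psi_2)^2\bigr\|_{C^{k-2,\alpha}}\leq C(r)\|\phi_1-\phi_2\|_{C^{k,\alpha}}.
\]
For the resolvents, I use the standard identity $P_1^{-1}-P_2^{-1}=P_1^{-1}(P_2-P_1)P_2^{-1}$ with $P_i=\Delta_{\phi_i}$ (and with $P_i=\Delta_{\phi_i}-I$), combined once more with Schauder estimates on $\H^{k,\alpha}_\delta$ (ellipticity is preserved on $B_r(\phi_0,\psi_0)$ by Lemma \ref{lemmar}), to get $\|\Delta_{\phi_1}^{-1}f-\Delta_{\phi_2}^{-1}f\|_{C^{k,\alpha}}\leq C(r)\|\phi_1-\phi_2\|_{C^{k,\alpha}}\|f\|_{C^{k-2,\alpha}}$, and analogously for $(\Delta_{\phi_i}-I)^{-1}$.

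Putting the two pieces together and integrating in $t\in[-\epsilon,\epsilon]$ yields
\[
|T(\phi_1,\psi_1)-T(\phi_2,\psi_2)|_{k,\alpha}\leq \epsilon\,C(r)\,|(\phi_1,\psi_1)-(\phi_2,\psi_2)|_{k,\alpha},
\]
and shrinking $\epsilon$ further if necessary so that $\epsilon\,C(r)<1$ makes $T$ a contraction on $B_r(\phi_0,\psi_0)$. The main obstacle is the last piece, namely controlling the Lipschitz dependence of the two operator inverses on $\phi$ in the $C^{k,\alpha}$ topology; everything else is algebraic once Lemmas \ref{schauderest}, \ref{lemmar} and \ref{lemmag} are invoked.
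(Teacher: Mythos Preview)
Your proposal is correct and follows essentially the same approach as the paper: Schauder estimates for $\Delta_\phi^{-1}$ and $(\Delta_\phi-I)^{-1}$, the algebraic identity $a^2-b^2=(a+b)(a-b)$ for the quadratic terms, and a Lipschitz bound on the inverse metric to control the $\phi$-dependence. The only organizational difference is that you telescope at the outset into a piece with $\phi$ fixed and a piece with $\psi$ fixed, whereas the paper treats $L_\phi(\psi)-L_{\tilde\phi}(\tilde\psi)$ directly by applying $\Delta_\phi$ to the difference (which amounts to your resolvent identity $P_1^{-1}-P_2^{-1}=P_1^{-1}(P_2-P_1)P_2^{-1}$) and uses the interpolation $g_s=(1-s)g_\phi+sg_{\tilde\phi}$ in place of your algebraic formula for $g_{\phi_1}^{-1}-g_{\phi_2}^{-1}$; these are equivalent computations.
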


\begin{proof}
For $(\phi, \psi)$ and $(\tilde \phi, \tilde\psi)$  in $B_r(\phi_0, \psi_0)$, let for simplicity $\tilde L = L_{\tilde \phi}$.
We need to estimate $\| L(\psi) - \tilde L(\tilde \psi) \|_\Cka{k}$. Define $f$ and $\tilde f$ such that $L(\psi) = \Delta_\phi^{-1}f$ and $\tilde L (\tilde \psi) = \Delta_{\tilde \phi}^{-1} \tilde f$.
Then we have
\begin{align*}
 \Delta_\phi ( L(\psi) - \tilde L (\tilde \psi) )	&= f - \tilde f - \Delta_\phi \tilde L(\tilde \psi) + \Delta_{\tilde \phi} \tilde L (\tilde \psi)\\
							&= f - \tilde f + (g_{\tilde \phi}^{i \bar \jmath} - g_\phi^{i \bar \jmath}) (\tilde L(\tilde \psi))_{i \bar \jmath}
\end{align*}
so by the Schauder estimates of Lemma \ref{schauderest} we have
\begin{align*}
 \| L(\psi) - \tilde L(\tilde \psi) \|_\Cka{k} 	&\leq  C( \|\phi\|_\Cka{k}) \\
						& \cdot \biggl ( \| f-\tilde f \|_\Cka{k-2} + \| \Delta_{\tilde \phi} \tilde L (\tilde \psi) - \Delta_\phi \tilde L(\tilde \psi) \|_\Cka{k-2} \biggr ).
\end{align*}
To estimate the second summand, let $g_s = (1-s) g_\phi + s g_{\tilde \phi}$. Then we notice we can write
\[
 \Delta_{\tilde \phi} \tilde L (\tilde \psi) - \Delta_\phi \tilde L(\tilde \psi) = - \biggl (\int_0^1 g_s^{i \bar \ell} g_s^{k \bar \jmath} ds \biggr ) \cdot (\tilde \phi - \phi)_{k \bar \ell} \cdot (\tilde L(\tilde \psi))_{i \bar \jmath}.
\]
so we have
\begin{align*}
 \| \Delta_{\tilde \phi} \tilde L (\tilde \psi) - \Delta_\phi \tilde L(\tilde \psi) \|_\Cka{k-2}	&\leq C( \|\phi\|_\Cka{k},  \|\tilde \phi\|_\Cka{k}) \| \tilde \phi - \phi \|_\Cka{k} \cdot \| \tilde L \tilde \psi \|_\Cka{k} \\
													&\leq C(r)  \| \tilde \phi - \phi \|_\Cka{k}
\end{align*}
where in the last inequality we have used the estimate for  $\| \tilde L \tilde \psi \|_\Cka{k}$ from the previous proposition.

Let us now consider $\tilde f - f$ which can be written as
\begin{align} \label{fftilde}
\tilde f - f	&= \frac 1 2 (\Delta_\phi -1)^{-1} | i \de \debar \psi|_\phi^2  - \frac 1 2 (\Delta_{\tilde \phi} -1)^{-1} | i \de \debar \tilde \psi|_{\tilde \phi}^2 \\ \nonumber
		&+ | i \de \debar \psi|_\phi^2 - | i \de \debar \tilde \psi|_{\tilde \phi}^2 \\
		&- \frac 1 2 (\Delta_\phi \psi)^2 + \frac 1 2 (\Delta_{\tilde \phi} \tilde \psi)^2. \nonumber
\end{align}

Let $h - \tilde h$ be the first summand, so we can write
\[
 (\Delta_\phi -1)(h-\tilde h) = | i \de \debar \psi|_\phi^2 - |i \de \debar \tilde \psi|_{\tilde \phi}^2 + (\Delta_{\tilde \phi} - \Delta_\phi) \tilde h.
\]

Again by Lemma \ref{schauderest} we have
\[
\| h - \tilde h \|_\Cka{k-2} \leq C( \| \phi\|_\Cka{k-2}) \cdot \| | i \de \debar \psi|_\phi^2 - | i \de \debar \tilde \psi|_{\tilde \phi}^2 + (\Delta_{\tilde \phi} - \Delta_\phi) \tilde h \|_\Cka{k-4}.
\]

The second summand is
\begin{align*}
  \|(\Delta_{\tilde \phi} - \Delta_\phi) \tilde h \|_\Cka{k-4}	&\leq \biggl \|- \biggl (\int_0^1 g_s^{i \bar \ell} g_s^{k \bar \jmath} ds \biggr ) \cdot (\tilde \phi - \phi)_{k \bar \ell} \cdot \tilde h_{i \bar \jmath} \biggr \|_\Cka{k-4} \\
								&\leq C(\|\phi\|_\Cka{k-2}, \|\tilde \phi\|_\Cka{k-2}) \cdot \| \tilde \phi - \phi \|_\Cka{k-2} \cdot \| \tilde h\|_\Cka{k-2}.
\end{align*}

By definition of $\tilde h$ we estimate then
\begin{align*}
 \|\tilde h\|_\Cka{k-2} &\leq C(\| \tilde \phi\|_\Cka{k-2}) \cdot \| | i \de \debar \tilde\psi|_{\tilde \phi}^2 \|_\Cka{k-4}\\
			&\leq C(r) (\| \tilde \phi\|_\Cka{k-2}+1)^2 \cdot \| \tilde \psi\|_\Cka{k-2}^2\\
			&\leq C(r).
\end{align*}
So we finally have for the first summand in \eqref{fftilde}
\[
 \biggl \| \frac 1 2 (\Delta_\phi -1)^{-1} | i \de \debar \psi|_\phi^2  - \frac 1 2 (\Delta_{\tilde \phi} -1)^{-1} | i \de \debar \tilde \psi|_{\tilde \phi}^2 \biggr \|_\Cka{k-2} \leq C(r) ( \|\tilde \phi - \phi\|_\Cka{k-2} + \| \tilde \psi - \psi \|_\Cka{k-2}).
\]

The second summand in \eqref{fftilde} is estimated by the same trick as in the previous proposition.

For the last summand in \eqref{fftilde} we have
\begin{align*}
  \frac 1 2 (\Delta_{\tilde \phi} \tilde \psi)^2- \frac 1 2 (\Delta_\phi \psi)^2 &= \frac 1 2 ( \Delta_\phi \psi - \Delta_{\tilde \phi}\tilde \psi)( \Delta_\phi \psi + \Delta_{\tilde \phi}\tilde \psi)\\
										 &= \frac 1 2 ( \Delta_\phi \psi - \Delta_{\tilde \phi}\psi + \Delta_{\tilde \phi}\psi - \Delta_{\tilde \phi}\tilde \psi)( \Delta_\phi \psi + \Delta_{\tilde \phi}\tilde \psi).
\end{align*}
so we estimate
\begin{align*}
 \biggl \|  \frac 1 2 (\Delta_{\tilde \phi} \tilde \psi)^2- \frac 1 2 (\Delta_\phi \psi)^2 \biggr \|_\Cka{k-2}	&\leq \biggl ( \| \Delta_\phi \psi - \Delta_{\tilde\phi} \psi \|_\Cka{k-2} + \| \Delta_{\tilde \phi}\psi - \Delta_{\tilde \phi}\tilde \psi\|_\Cka{k-2} \biggr )\\
														&\cdot \biggl ( \|  \Delta_\phi \psi \|_\Cka{k-2} + \| \Delta_{\tilde \phi}\tilde \psi \|_\Cka{k-2} \biggr ).	
\end{align*}

By the estimates for the Laplacians we are able to say that this quantity is $\leq C(r) ( \|\tilde \phi - \phi\|_\Cka{k} + \| \tilde \psi - \psi \|_\Cka{k})$.

Again, the estimate for the norm $| \cdot |_{k,\alpha}$ is the same multiplied by $\epsilon$, so again it suffices to pick $\epsilon(r)$ such that $\epsilon(r)C(r) < 1$.
\end{proof}

\subsubsection{Higher regularity}
Now we explain how to obtain the smoothness of the solution of Theorem \ref{thm:mainthmgenkahl}.
\begin{thm} \label{thm:smoothsum}
 For every $\phi_0\in \H$ ,
 $\psi_0 \in T_{\phi_0} \H$ ,
  there exists a positive $\epsilon$
and
 a curve $\phi \in C^\infty ((-\epsilon, \epsilon)  , \H)$
 which is the
 unique solution of \eqref{equa:geodesicsforsummetric} with smooth initial data $(\phi_0, \psi_0)$.
\end{thm}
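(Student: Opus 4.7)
My plan is to upgrade the $C^{6,\alpha}$ solution furnished by Theorem~\ref{thm:mainthmgenkahl} to a smooth one by a two-stage bootstrap: first spatially, via induction on the H\"older exponent $k$, and then temporally, via the evolution equation itself. Since smooth initial data $(\phi_0,\psi_0)$ lies in $C^{k,\alpha}(g)$ for every $k\ge 6$ and every $\alpha\in(0,1)$, Theorem~\ref{thm:mainthmgenkahl} can be applied at every regularity level to produce a candidate solution $\phi_k\in C^2((-\epsilon_k,\epsilon_k),\H_\delta^{k,\alpha})$ on a possibly shrinking interval. The uniqueness assertion in that theorem, being established already at the $C^{6,\alpha}$ level, forces $\phi_k=\phi_6$ on the intersection of the domains, so on $(-\epsilon_k,\epsilon_k)$ the baseline solution $\phi:=\phi_6$ in fact belongs to $C^2(\cdot,C^{k,\alpha})$.

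The main obstacle is promoting this higher regularity from the small interval $(-\epsilon_k,\epsilon_k)$ to the full baseline interval $(-\epsilon,\epsilon)=(-\epsilon_6,\epsilon_6)$, since a priori the windows $\epsilon_k$ could shrink with $k$. My strategy is a finite-cover restart: choose a point $t^\star$ near the right endpoint of the current regularity window, apply Theorem~\ref{thm:mainthmgenkahl} at regularity $k$ to the data $(\phi(t^\star),\phi'(t^\star))\in C^{k,\alpha}$, and glue the resulting solution back to $\phi$ using the $C^{6,\alpha}$ uniqueness. Provided the restart time is bounded below uniformly, finitely many such restarts cover $(-\epsilon,\epsilon)$. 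The lower bound on the restart time rests on two facts: the ellipticity constant $\delta$ remains uniform because $\phi\in\H_\delta^{6,\alpha}$ throughout, and the $C^{k,\alpha}$-norm of the data at restart time stays under control. The latter must be deduced as an a priori estimate, obtained by differentiating the system \eqref{equa:systemsum} in the spatial variables, viewing the result as a linear equation with leading operator $\Delta_\phi(\Delta_\phi-I)$ whose coefficients lie in lower H\"older norms already controlled inductively, and closing a Gronwall loop with the Schauder bound of Lemma~\ref{schauderest} together with Lemma~\ref{lemmag}.

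Once $\phi\in C^2((-\epsilon,\epsilon),C^{k,\alpha})$ is established for every $k$, temporal smoothness is read off from the equation $\phi''=L_\phi(\phi')$ itself. Because $L_\phi$ is built from spatial derivatives, pointwise products, and the two elliptic inverses $\Delta_\phi^{-1}$ and $(\Delta_\phi-I)^{-1}$, it maps $C^{k,\alpha}$ into $C^{k,\alpha}$ with smooth dependence on the base point $\phi$. Differentiating the equation $m$ times in $t$ expresses $\phi^{(m+2)}$ as an algebraic combination of $\phi,\phi',\dots,\phi^{(m+1)}$ and elliptic inverses of the same type, so induction on $m$ yields $\phi\in C^m((-\epsilon,\epsilon),C^\infty(M))$ for every $m$, which together with the spatial bootstrap gives the asserted $C^\infty$ regularity.
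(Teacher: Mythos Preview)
Your proposal is correct and rests on the same key observation as the paper's proof: spatially differentiating the system \eqref{equa:systemsum} yields a \emph{linear} system in $(\partial_A\phi,\partial_A\psi)$ with coefficients controlled by lower H\"older norms of $(\phi,\psi)$ (this is the content of Lemma~\ref{linear oparator}). The paper exploits this more directly---since a linear system persists for as long as its coefficients do, one obtains $\epsilon(k+1,\alpha)\ge\epsilon(k,\alpha)$ without any restart machinery---whereas you package the same linearity as a Gronwall a~priori bound feeding a finite-cover continuation argument, and you also make explicit the temporal bootstrap via $\phi''=L_\phi(\phi')$ that the paper leaves tacit.
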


We isolate the following technical lemma that can be proved by computation in local coordinates.
\begin{lemma} \label{lemmaAderivatives}
 Let $\de_A$ be the derivative with respect to the complex coordinate $z_A$ and let $f_A = \de_A f$. Then the following hold
 \begin{align*}
 ( g_\vphi^{i\bar \jmath})_A&=-g_\phi^{i \bar s} ({g_\phi}_{ \bar s m})_A g_\phi^{m \bar \jmath};\\
  \de_A (\Delta_\phi f) 		&= \Delta_\phi f_A +( g_\vphi^{i\bar \jmath})_A f_{i\bar \jmath}; \\
  \de_A | i \de \debar \psi|^2_\phi	&=2 (i \de \debar \psi, i \de \debar \psi_A) - \psi_{i \bar \jmath} \psi_{k \bar \ell} g_\phi^{i \bar s} ({g_\phi}_{ \bar s m})_A g_\phi^{m \bar \ell} g_\phi^{m \bar \jmath} -  \psi_{i \bar \jmath} \psi_{k \bar \ell} g_\phi^{i \bar \ell} g_\phi^{k \bar s} ({g_\phi}_{ \bar s m})_A g_\phi^{m \bar \jmath} \\
					&= 2(i \de \debar \psi, i \de \debar \psi_A) + B_{\phi \psi} \phi_A; \\
  \de_A (\Delta_\phi \psi)^2		&= 2 \Delta_\phi \psi[ \Delta_\phi \psi_A+( g_\vphi^{i\bar \jmath})_A \psi_{i\bar \jmath}]
  \end{align*}
  where $B_{\phi \psi}$ is a linear operator.
\end{lemma}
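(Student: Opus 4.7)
The plan is to verify the four identities one at a time by direct computation in local complex coordinates, using the product and chain rules together with the fact that partial derivatives in complex coordinates commute. Since nothing beyond calculus plus the definition of the quantities involved is needed, the lemma is essentially bookkeeping; the only subtlety is to keep track of which index contractions give the ``clean'' geometric term $(i\de\debar\psi, i\de\debar\psi_A)$ and which produce an extra factor coming from differentiating the metric.

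First I would establish (1). Differentiating the identity $g_\phi^{i\bar\jmath}\, {g_\phi}_{k\bar\jmath} = \delta^i_k$ with respect to $z_A$ and using the Leibniz rule gives $(g_\phi^{i\bar\jmath})_A\,{g_\phi}_{k\bar\jmath} = -g_\phi^{i\bar\jmath}({g_\phi}_{k\bar\jmath})_A$, and contracting with $g_\phi^{k\bar s}$ and relabeling yields the stated formula. From this, identity (2) is immediate: write $\Delta_\phi f = g_\phi^{i\bar\jmath} f_{i\bar\jmath}$, apply $\de_A$ using Leibniz, and use $\de_A f_{i\bar\jmath} = (f_A)_{i\bar\jmath}$ (partial derivatives commute) to recognize the second term as $\Delta_\phi f_A$.

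Identity (4) follows from the chain rule $\de_A(\Delta_\phi\psi)^2 = 2(\Delta_\phi\psi)\,\de_A(\Delta_\phi\psi)$ combined with (2) applied to $f=\psi$. The main work is (3): expand
\[
|i\de\debar\psi|^2_\phi = g_\phi^{i\bar\jmath} g_\phi^{k\bar\ell}\, \psi_{i\bar\ell}\, \psi_{k\bar\jmath},
\]
and differentiate by $\de_A$ using Leibniz. The two terms in which $\de_A$ hits $\psi_{i\bar\ell}$ or $\psi_{k\bar\jmath}$ combine, by the symmetry of the Hermitian inner product on $(1,1)$-forms, into $2(i\de\debar\psi, i\de\debar\psi_A)$. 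The two terms in which $\de_A$ hits one of the inverse metric factors are, by (1), linear in $({g_\phi}_{\bar s m})_A$; since ${g_\phi}_{\bar s m} = g_{\bar s m} + \phi_{\bar s m}$ and the background metric $g$ does not depend on $\phi$, the derivative reduces to a bilinear-in-$\psi$ coefficient times $\phi_A$ (in fact times certain mixed derivatives of $\phi_A$, all of which are encoded by a single linear operator $B_{\phi\psi}$ acting on $\phi_A$). Packaging these terms as $B_{\phi\psi}\phi_A$ gives the claimed formula.

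The hard part, such as it is, lies only in verifying that the ``metric-derivative'' contribution in (3) can indeed be written as a linear operator $B_{\phi\psi}$ applied to $\phi_A$ rather than as a differential operator of higher order; but since $({g_\phi}_{\bar s m})_A = \phi_{\bar s m A}=(\phi_A)_{\bar s m}$, which is a second-order derivative of $\phi_A$, this merely means $B_{\phi\psi}$ is a second-order linear differential operator whose coefficients are polynomial in $g_\phi^{-1}$ and $\psi_{i\bar\jmath}$. No convergence or regularity issue arises because the computation is purely pointwise and algebraic.
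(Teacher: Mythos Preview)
Your proposal is correct and matches the paper's approach exactly: the paper simply asserts that the lemma ``can be proved by computation in local coordinates'' and gives no further argument, so your Leibniz-rule verification of each identity is precisely the intended proof. One tiny caveat on (3): the background metric $g_{\bar s m}$ does depend on the coordinate $z_A$ even though it does not depend on $\phi$, so $({g_\phi}_{\bar s m})_A = (g_{\bar s m})_A + (\phi_A)_{\bar s m}$ has an inhomogeneous piece and $B_{\phi\psi}$ is, strictly speaking, affine rather than homogeneous linear in $\phi_A$; this is harmless for the application in Lemma~\ref{linear oparator}, where one only needs the resulting system to be a linear (possibly inhomogeneous) ODE in $(\phi_A,\psi_A)$.
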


We want to derive the second equation of \eqref{equa:linsyst} by deriving the equation
\begin{equation} \label{eqtoderive_sum}
 F(\phi,\psi)=(\Delta_\phi-1) \biggl [ \Delta_\phi \psi' - |i \de \debar \psi|_\phi^2 + \frac 1 2 (\Delta_\phi \psi)^2 \biggr ] - \frac 1 2 | i \de \debar \psi |_\phi^2=0.
\end{equation}
\begin{lemma}\label{linear oparator}
$\p_A F(\phi,\psi)$ is a linear fourth order operator on $(\phi_A, \psi_A)$. When $(\phi,\psi)$ are $C^{k,\alpha}$, the coefficients of $\p_A F(\phi,\psi)$ are $C^{k-4,\alpha}$.
\end{lemma}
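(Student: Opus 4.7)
The plan is to obtain $\p_A F$ by applying $\p_A$ termwise to the expression in \eqref{eqtoderive_sum} and using the identities collected in Lemma~\ref{lemmaAderivatives}. Because $F(\phi,\psi)$ is built from $\phi$, $\psi$, $\psi'$ (and their spatial derivatives) via polynomial operations and compositions with $\Delta_\phi$, the chain rule automatically yields an expression linear in $\phi_A$, $\psi_A$ and $(\psi_A)'=\p_A\psi'$, so the only real issues are to pin down the spatial order and the regularity of the resulting coefficients.

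I would begin with the highest-order piece $\Delta_\phi^2\psi'$ coming from $(\Delta_\phi-1)\Delta_\phi\psi'$. Commuting $\p_A$ with each copy of $\Delta_\phi$ via
\[
\p_A\Delta_\phi f = \Delta_\phi f_A + (g_\phi^{i\bar\jmath})_A f_{i\bar\jmath}
\]
together with $(g_\phi^{i\bar\jmath})_A=-g_\phi^{i\bar s}(\phi_A)_{\bar s m}g_\phi^{m\bar\jmath}$ from Lemma~\ref{lemmaAderivatives}, the leading contribution becomes
\[
\Delta_\phi^2(\psi_A)' + \Delta_\phi\bigl[(g_\phi^{i\bar\jmath})_A\psi'_{i\bar\jmath}\bigr] + (g_\phi^{i\bar\jmath})_A(\Delta_\phi\psi')_{i\bar\jmath},
\]
which is of fourth spatial order, linear in $(\psi_A)'$ and in $\phi_A$, with coefficients depending polynomially on $g_\phi^{-1}$ and on at most fourth spatial derivatives of $\psi'$. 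The remaining terms inside the bracket, $-|i\de\debar\psi|_\phi^2$ and $\tfrac12(\Delta_\phi\psi)^2$, are of second spatial order in $\psi$; by the explicit formulas of Lemma~\ref{lemmaAderivatives} their $\p_A$ derivatives are second-order linear operators in $(\phi_A,\psi_A)$, and after applying the outer $(\Delta_\phi-1)$ the resulting contribution is at most fourth spatial order in $(\phi_A,\psi_A)$. The trailing $-\tfrac12|i\de\debar\psi|_\phi^2$ contributes only second order in $(\phi_A,\psi_A)$.

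Summing these contributions produces the desired linear fourth-order operator on $(\phi_A,\psi_A)$ (with the time derivative $(\psi_A)'$ playing the role of $\psi'_A$). For the regularity of the coefficients, I would invoke the existence result of Section~\ref{Existence and uniqueness} to get $(\phi,\psi)\in\Cka{k}$ and then use \eqref{equa:systemsum} to propagate this to $\psi'\in\Cka{k}$: the right-hand side of $\psi'=L_\phi(\psi)$ is $\Delta_\phi^{-1}$ of a $\Cka{k-2}$ expression, so by elliptic regularity $\psi'\in\Cka{k}$. Since each coefficient of $\p_A F$ is polynomial in $g_\phi^{-1}$ and in at most fourth spatial derivatives of $\phi$, $\psi$, $\psi'$, it lies in $\Cka{k-4}$.

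The main care needed is simply bookkeeping: the outer $\Delta_\phi$ in front of $\Delta_\phi\psi'$ yields, upon commutation with $\p_A$, a fourth-order contribution in $\phi_A$ that is easy to overlook, and the various applications of Lemma~\ref{lemmaAderivatives} have to be combined so that no hidden $\phi_A$ derivative of order higher than four sneaks in. Everything beyond this tracking is routine multi-variable Leibniz calculus.
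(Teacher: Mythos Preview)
Your proposal is correct and follows essentially the same route as the paper: differentiate \eqref{eqtoderive_sum} term by term using the commutation identities of Lemma~\ref{lemmaAderivatives}, and observe that each piece is linear in $(\phi_A,\psi_A,(\psi_A)')$ of order at most four. Your write-up is in fact slightly more careful than the paper's, since you also track the order explicitly and supply the bootstrap $\psi'=L_\phi(\psi)\in\Cka{k}$ to justify the $\Cka{k-4}$ regularity of the coefficients; the paper's proof only records the linearity of each term and leaves the order and coefficient regularity implicit. One small slip: your formula $(g_\phi^{i\bar\jmath})_A=-g_\phi^{i\bar s}(\phi_A)_{\bar s m}g_\phi^{m\bar\jmath}$ drops the contribution $(g_{\bar s m})_A$ from the background metric (compare Lemma~\ref{lemmaAderivatives}), so strictly speaking the resulting expression is affine rather than linear in $\phi_A$; this is harmless for the application to the system \eqref{equa:linsyst}, and the paper glosses over the same point.
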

\begin{proof}

The derivative of the first term is, by Lemma \ref{lemmaAderivatives},
\[
 \de_A (\Delta_\phi - 1) \Delta_\phi \psi' = (\Delta_\phi -1) \bigl [ \Delta_\phi \psi_A' +(g_\phi^{i\bar \jmath})_A\psi'_{i\bar \jmath})\bigr ] + (g_\phi^{i\bar \jmath})_A(\Delta_\phi \psi')_{i\bar \jmath}
\]
where we notice linearity with respect to $\phi_A$ and $\psi_A$.

The derivative of the second term is
\begin{align*}
 \de_A (\Delta_\phi -1)| i \de \debar \psi |_\phi^2	&= (\Delta_\phi -1) \de_A | i \de \debar \psi |_\phi^2+ (g^{i\bar \jmath}_\phi)_A(| i \de \debar \psi |_\phi^2)_{i\bar \jmath} \\
							&=(\Delta_\phi -1) [2(i \de \debar \psi, i \de \debar \psi_A) + B_{\phi \psi} \phi_A]+ (g^{i\bar \jmath}_\phi)_A(| i \de \debar \psi |_\phi^2)_{i\bar \jmath}  
							\end{align*}
and we notice again linearity with respect to $\phi_A$ and $\psi_A$.

The third and fourth terms are as in Lemma \ref{lemmaAderivatives} and are linear with respect to $\phi_A$ and $\psi_A$ as well.
\end{proof}

\begin{proof}[Proof of Theorem~\ref{thm:smoothsum}]
When we are given a smooth initial data $(\phi_0,\psi_0)$ and H\"older exponent $(k,\alpha)$ with $k\geq 6$ and $\alpha \in (0,1)$, according to Theorem \ref{thm:mainthmgenkahl}, we have a maximal lifespan $\epsilon=\epsilon(k+1,\alpha)$ of the geodesic $\phi(t) \in C^2 ((-\epsilon, \epsilon)  , \H^{k+1,\alpha} )$. Meanwhile, for a less regular space $(k,\alpha)$, we have an other maximal lifespan $\epsilon(k,\alpha)$. In general, $$\epsilon(k+1,\alpha)\leq \epsilon(k,\alpha).$$ Now we explore the important property of our geodesic equation and thus prove the inverse inequality $\epsilon(k+1,\alpha)\geq \epsilon(k,\alpha)$.

Recall that our geodesic equation could be written down as a couple system \eqref{equa:systemsum}.

The important observation is that this system is of order zero. In a local coordinate chart on $M$, we take the derivative $\partial_A= \frac{\partial}{\partial z_A}$ on the both side of the equations and get
\begin{equation} \label{equa:linsyst}
  \begin{cases}
   ( \partial_A\phi)' = \partial_A\psi  \\
 (\partial_A\psi) ' = \partial_A(L_\phi \psi) .
   \end{cases}
\end{equation}

If we manage to prove that this is a linear system in $\phi_A = \de_A \phi$ and $\psi_A=\de_A \psi$ (all other functions treated as constants) then we can argue as follows.
According to Lemma \ref{linear oparator}, the coefficients of \eqref{equa:linsyst} are $C^{k-4,\alpha}$ and exist for $|t| < \epsilon(k, \alpha)$. Because of its linearity and of fourth order on $(\phi_A, \psi_A)$, its $C^{k, \alpha}$ solution $(\phi_A, \psi_A)$ exists as long as the coefficients do, so we have that $\phi$ is $C^{k+1, \alpha}$ at least for $|t|< \epsilon(k, \alpha)$, proving that $\epsilon(k+1,\alpha)\geq \epsilon(k,\alpha)$.
\end{proof}
\subsection{Exponential map, Jacobi fields and conjugate points}
\label{Jacobi equation and conjugate points}
With the local well-posedness of the geodesic, we are able to define the exponential map locally at point $\phi\in  \H$ by
\begin{align}
\exp_\phi (t\psi)=\gamma(t),  0\leq t\leq\epsilon
\end{align}
where $\gamma$ is the geodesic starting from $\phi$ with initial speed $\psi$.
Furthermore, we have the following.
\begin{corol}
For any $\phi_1\in  \H$, there exists an $\epsilon > 0$ so that for any $\phi_2\in \H $ with $\|\phi_1-\phi_2\|_{C^{2,\alpha}} < \epsilon$, there is a unique geodesic connecting $\phi_1$ to $\phi_2$ whose length is less than $\epsilon$.
\end{corol}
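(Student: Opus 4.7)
The plan is to deduce this corollary from Theorem~\ref{thm:mainthmgenkahl} and the continuous dependence on initial data of Section~\ref{Local well-posedness of the geodesic equation} by showing that the exponential map is a local diffeomorphism of Banach manifolds at the origin, and then applying the inverse function theorem. Throughout I would fix H\"older exponents $(k,\alpha)$ with $k\geq 6$ and $\alpha\in(0,1)$ and work in the Banach manifold $\H^{k,\alpha}_\delta$ introduced earlier.

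First I would argue that, by inspecting the proof of Theorem~\ref{thm:mainthmgenkahl}, one obtains a uniform lifespan $T>0$ for the Cauchy problem based at $\phi_1$ provided the initial velocity lies in a ball $B_\rho(0)\subset T_{\phi_1}\H^{k,\alpha}$ with $\rho$ depending only on $\delta$, $\phi_1$, and $(k,\alpha)$; this is because the contraction estimates in that proof depend on $\|\psi_0\|_{k,\alpha}$ only through bounded quantities once one stays in the ball of radius $r$ around $\phi_1$. After rescaling time one can therefore define
\[
\exp_{\phi_1}\colon B_{\rho'}(0)\subset T_{\phi_1}\H^{k,\alpha}\longrightarrow\H^{k,\alpha},\qquad \exp_{\phi_1}(\psi)=\gamma_\psi(1),
\]
where $\gamma_\psi$ is the unique geodesic with $\gamma_\psi(0)=\phi_1$ and $\gamma_\psi'(0)=\psi$.

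Next I would upgrade the continuous dependence of Section~\ref{Local well-posedness of the geodesic equation} to $C^1$-dependence of the flow on the initial velocity, by applying the same contraction-mapping argument to the difference quotient; the linearized system \eqref{combinationequa:linearsystemsum} is of order zero in the principal part (the observation crucial to the proof of Theorem~\ref{thm:smoothsum}), so all required Schauder estimates go through. This shows $\exp_{\phi_1}$ is $C^1$ as a map of Banach manifolds. Its differential at the origin is computed by the standard Jacobi-field calculation: for any $\psi\in T_{\phi_1}\H^{k,\alpha}$, the rescaling property $\exp_{\phi_1}(s\psi)=\gamma_\psi(s)$ yields
\[
d(\exp_{\phi_1})_0\,\psi=\tfrac{d}{ds}\Big|_{s=0}\gamma_\psi(s)=\psi,
\]
so $d(\exp_{\phi_1})_0=\mathrm{id}$.

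The inverse function theorem in Banach spaces then provides neighborhoods $V\subset T_{\phi_1}\H^{k,\alpha}$ of $0$ and $W\subset\H^{k,\alpha}$ of $\phi_1$ between which $\exp_{\phi_1}$ is a diffeomorphism. Since the $C^{2,\alpha}$ topology is weaker than the $C^{k,\alpha}$ topology and $\H\subset\H^{k,\alpha}$, choosing $\epsilon$ small ensures that any $\phi_2\in\H$ with $\|\phi_1-\phi_2\|_{C^{2,\alpha}}<\epsilon$ lies in $W$ and thus has a unique preimage $\psi=\exp_{\phi_1}^{-1}(\phi_2)$; by continuity of $\exp_{\phi_1}^{-1}$ at $\phi_1$ together with the estimate $\mathrm{length}(\gamma_\psi)\leq C\,\|\psi\|_{k,\alpha}$ coming from continuity of the combination metric on $\H^{k,\alpha}_\delta$, one can further shrink $\epsilon$ to force this length to be less than $\epsilon$. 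The main obstacle I anticipate is the promotion of continuous to $C^1$ dependence on initial data, which is where the order-zero nature of \eqref{combinationequa:linearsystemsum} is used in an essential way; once this is in hand, the remaining steps are routine applications of the Banach inverse function theorem.
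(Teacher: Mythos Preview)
The paper does not supply a proof of this corollary; it is stated immediately after the definition of the exponential map as a direct consequence of local well-posedness. Your strategy --- establish $C^1$ dependence on initial data, compute $d(\exp_{\phi_1})_0=\mathrm{id}$, and apply the Banach inverse function theorem --- is exactly the standard argument and is presumably what the authors intend.

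There is, however, a genuine gap in your final paragraph. You argue that since the $C^{2,\alpha}$ topology is weaker than the $C^{k,\alpha}$ topology, choosing $\epsilon$ small forces any $\phi_2$ with $\|\phi_1-\phi_2\|_{C^{2,\alpha}}<\epsilon$ to lie in the $C^{k,\alpha}$-neighborhood $W$ produced by the inverse function theorem. This implication runs the wrong way: a small value of a \emph{weaker} norm does not control a stronger one, so a $C^{2,\alpha}$-ball around $\phi_1$ is not contained in any prescribed $C^{k,\alpha}$-open set, however small you take $\epsilon$. The inverse function theorem, applied in $\H^{k,\alpha}$ with $k\geq 6$, only yields a $C^{k,\alpha}$-neighborhood on which $\exp_{\phi_1}$ is a diffeomorphism. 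To obtain the corollary literally as stated (with $C^{2,\alpha}$) one would need an independent argument that the boundary-value problem is solvable under mere $C^{2,\alpha}$-closeness of the endpoints, which the paper's estimates do not provide since the existence theorem itself requires $k\geq 6$. In all likelihood the $C^{2,\alpha}$ in the statement is an informality and should read $C^{k,\alpha}$; your overall method is sound, but the specific sentence justifying the passage from $C^{k,\alpha}$ to $C^{2,\alpha}$ is incorrect and should be removed or replaced by a remark that the hypothesis ought to be $\|\phi_1-\phi_2\|_{C^{k,\alpha}}<\epsilon$.
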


Now that we have achieved the existence of smooth short-time geodesics we can move a step further to bring the definition of its Jacobi vector fields.
The very definition comes from classical Riemannian geometry, see \cite{calabimetric} for more details.

Let $\gamma : [0,\epsilon)\rightarrow \mathcal{H}$ be a smooth geodesic for the metric connection $D$ on $\H$.
A Jacobi field $J$ along $\gamma$ is a map
$J : [0, \epsilon ) \rightarrow T\mathcal{H}$ such that $J(t) \in T_{\gamma (t)} \mathcal{H} $ for all $t\in [0,\epsilon)$ and moreover satisfies the Jacobi equation
\begin{equation} \label{jacobieq}
\frac{D^2}{dt^2}J(t) + R \biggl ( J(t), \frac{d}{dt}\gamma(t) \biggr ) \frac{d}{dt}\gamma(t) = 0.
\end{equation}
The Jacobi field is a vector field along the geodesic $\gamma(t)$. Let $v=\frac{d}{dt}\vert_{t=0}\gamma(t)$ at $\gamma(0)=\phi$, the geodesic is given by the exponential map $\gamma(t) = \exp_\phi tv$.
Then given $w\in T_\phi \H$, the solution of the Jacobi equation \eqref{jacobieq} with initial condition $J(0)=0$ and $J'(0) = w$ is given by
\[
 J(t)= d \exp_\phi |_{tv} tw .
\]

 The definition of conjugate points in the infinite dimensional setting is different from the one from classical Riemannian geometry.
 Let $\phi \in \H$, $\psi \in T_\phi \H$ and let $\gamma$ be the geodesic with $\gamma(0) = \phi$ and $\gamma'(0) = \psi$.
 There are two notions related to conjugate points, cf. e.g. \cite{grossman, preston_diffeo, preston_fredholm}. 
 \begin{defi} \label{defi:mono and epiconjugate}
 We say that $\gamma(1)$ is
 \begin{itemize}
\item \emph{monoconjugate} to $\phi$ if $d \exp_\phi |_\psi$ is not injective;
\item \emph{epiconjugate} to $\phi$ if $d \exp_\phi |_\psi$ is not surjective.
 \end{itemize} 
 \end{defi}
 \begin{rem}
In order to understand the conjugate points, it turns out to further study whether $d \exp_\phi |_\psi$ is a Fredholm operator between the tangent spaces of $\mathcal H$. Then the infinite dimensional version of Sard's theorem applies \cite{MR0185604}.
\end{rem}

\subsection{Dirichlet metric and a comparison theorem} \label{section:gradientmetric}
Now we continue the study of the (Dirichlet) gradient metric.
\subsubsection{Sectional curvature for the gradient metric}
We denote $\phi = \phi (s , t)$ be
a smooth two parameter family of curves in the space of K\"ahler metrics $\mathcal{H}$,
and the corresponding two parameter families of curves of tangent vectors $\phi_t$, $\phi_s$ along $\phi$
are $\mathbb{R}$-linearly independent.
The sectional curvature of the gradient metric is computed in \cite{calazheng},
\begin{align*}
  K_G (\phi_s , \phi_t )_\phi  
= \frac{1}{2}\int_M |d a(s,t)|_{g_\phi}^2\frac{\omega_\phi^n}{n!} - \frac{1}{2}
\int_M (d a(s,s) , d a(t,t))_{g_\phi} \frac{\omega_\phi^n}{n!}\, , 
\end{align*}
where the symmetric expression $a(\sigma , \tau)$ satisfies
\begin{align*}
\Delta_\phi a(\sigma , \tau) = \Delta_\phi \phi_\sigma \Delta_\phi \phi_\tau - (i \p \bar \p \phi_\sigma, i\p \bar \p \phi_\tau) .
\end{align*}

We let
\[
 \{ \phi_s , \phi_t \}_\phi =
\frac{\sqrt{-1}}{2}
\left(
g^{i\bar{\jmath}} \frac{\partial \phi_s}{ \partial z^i }\frac{\partial \phi_t}{ \partial z^{\bar{\jmath}}}
-
g^{i\bar{\jmath}} \frac{\partial \phi_t}{ \partial z^i }\frac{\partial \phi_s}{ \partial z^{\bar{\jmath}}}
\right)
=
\Ima(\partial \phi_s , \overline{\partial} \phi_t )_\phi \; .
\]
The expression of the sectional curvature $K_M$ for the $L^2$ metric is, for all linearly independent
sections $\phi_s , \phi_t$,
\[
 K_M (\phi_s , \phi_t )_\phi = -\frac{\int_M \Ima (\partial \phi_s , \overline{\partial} \phi_t)_\phi^2
\frac{\omega_\phi^n}{n!}}
{\sqrt{\int_M \phi_s^2 \frac{\omega_\phi^n}{n!}}
\sqrt{\int_M \phi_s^2 \frac{\omega_\phi^n}{n!}}
- \int_M \phi_s \phi_t \frac{\omega_\phi^n}{n!}}.
\]
Therefore, $K_M \leq 0$. On the other side, the first author proved that, for any linearly
independent sections $\phi_s , \phi_t$  the sectional curvature for the Calabi metric $K_C$ is
\[
 K_C (\phi_s , \phi_t) = \frac{1}{4\vol}.
\]
In a private communication, Calabi conjectured that there exists the following relation among the sectional curvatures of $L^2$ metric, gradient metric and Calabi metric,
\begin{align*}
K_{M} \leq K_{G} < K_{C}.
\end{align*}
\begin{rem}
It would be interesting to construct examples to detect the sign of the sectional curvature of the gradient metric and determine whether this conjecture holds.
\end{rem}

\subsubsection{Local well-posedness for the gradient metric}

On the other hand, the application of the proofs of Theorem~\ref{thm:mainthmgenkahl} and \ref{thm:smoothsum} leads to the corresponding theorem of the gradient metric.

\begin{thm}\label{thm:mainthmgradient}
For every integer $k\geq 6$ and $\alpha \in (0, 1)$ and initial data $\phi_0\in \H^{k,\alpha}$ and $\psi_0 \in T_{\phi_0} \H^{k,\alpha}$ there exists a positive $\epsilon$
and
 a curve $\phi \in C^2 ((-\epsilon, \epsilon)  , \H^{k,\alpha} )$
which is the
 unique solution of the geodesic equation \eqref{equa:gradientgeodesic} with initial data $(\phi_0, \psi_0)$.
Moreover, if the initial data is smooth, then the solution $\phi$ is also smooth.
\end{thm}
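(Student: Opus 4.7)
The plan is to copy the Banach fixed-point strategy used for the sum metric, exploiting the fact that the gradient geodesic equation has the same order-balanced structure emphasized after \eqref{combination geodesic}, with one fewer inverse operator to control. First I would recast \eqref{equa:gradientgeodesic} as a first-order system in time by inverting $\Delta_\phi$:
\begin{equation*}
\begin{cases}
\phi' = \psi, \\
\psi' = L_\phi(\psi) := \dfrac{1}{2}\,\Delta_\phi^{-1}\bigl[\,|i\de\debar \psi|_\phi^2 - (\Delta_\phi \psi)^2\,\bigr].
\end{cases}
\end{equation*}
Two second derivatives of $\psi$ on the right are exactly cancelled by the $\Delta_\phi^{-1}$, so $L_\phi$ is an order-zero nonlinear map in $\psi$ whenever $\phi$ is two derivatives more regular, and the system is perfectly suited to a contraction argument in $\Cka{k}$.

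Second, I would transplant the construction of Section~\ref{Existence and uniqueness} essentially verbatim: take the function space $X$ of \eqref{equa:functionspace}, the metric ball $B_r(\phi_0,\psi_0)$ with $r$ fixed as in Lemma~\ref{lemmar}, and the integral operator $T$ modelled on \eqref{equa:mapTgen}. The self-map estimate is strictly simpler than in the sum case because the $(\Delta_\phi - I)^{-1}$-term is absent; by Lemmas~\ref{schauderest} and \ref{lemmag} it reduces to
\begin{equation*}
\|L_\phi(\psi)\|_{\Cka{k}} \leq C(\|\phi\|_{\Cka{k}})\,\bigl\||i\de\debar \psi|_\phi^2 - (\Delta_\phi \psi)^2\bigr\|_{\Cka{k-2}} \leq C(r)\,\|\psi\|_{\Cka{k}}.
\end{equation*}
The contraction estimate is handled by applying $\Delta_\phi$ to $L_\phi(\psi)-L_{\tilde\phi}(\tilde\psi)$ and writing
\begin{equation*}
\Delta_\phi\bigl[L_\phi(\psi)-L_{\tilde\phi}(\tilde\psi)\bigr] = (f-\tilde f) + (g_{\tilde\phi}^{i\bar j}-g_\phi^{i\bar j})\bigl(L_{\tilde\phi}(\tilde\psi)\bigr)_{i\bar j},
\end{equation*}
where $f,\tilde f$ are the corresponding bracketed data. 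Expand $f-\tilde f$ by $a^2-b^2=(a-b)(a+b)$ applied to the $(\Delta_\phi\psi)^2$ piece and linearize $|i\de\debar \psi|_\phi^2$ similarly, using the telescoping interpolation $g_s=(1-s)g_\phi + s g_{\tilde\phi}$ to control the metric differences, exactly as in the sum metric proof. Picking $\epsilon(r)$ with $\epsilon(r)C(r)<1$ gives both self-mapping and the contraction property, and Banach's theorem delivers the unique $C^2$ solution. Continuous dependence on initial data follows from the corresponding linear system, a simpler version of \eqref{combinationequa:linearsystemsum}.

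For smoothness I would copy Theorem~\ref{thm:smoothsum}: apply $\partial_A = \partial/\partial z_A$ to both equations and use Lemma~\ref{lemmaAderivatives} to obtain a linear fourth-order system for $(\phi_A,\psi_A)$ whose coefficients are $C^{k-4,\alpha}$ as soon as $(\phi,\psi)\in C^{k,\alpha}$. Because the auxiliary system is linear in $(\phi_A,\psi_A)$, its $C^{k,\alpha}$ solution persists for the full lifespan of its coefficients, giving $\epsilon(k+1,\alpha)\geq \epsilon(k,\alpha)$ and, by iteration, $C^\infty$ regularity from smooth initial data.

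The main obstacle is the step the sum metric case implicitly sidestepped: since $\Delta_\phi$ has the constants as its kernel, $L_\phi$ is only well-defined on inputs whose bracket $|i\de\debar \psi|_\phi^2 - (\Delta_\phi \psi)^2$ is orthogonal to constants with respect to $\omega_\phi^n$. This compatibility condition must be verified along the flow, equivalently that the normalization $\int_M\psi\,\omega_\phi^n = 0$ defining $T_\phi\H$ is preserved by the system. Once checked, $\Delta_\phi$ is a topological isomorphism between the mean-zero H\"older spaces by Fredholm theory, Lemma~\ref{schauderest} applies with $c_2 = 0$ on this subspace, and all of the bounds above go through exactly as sketched.
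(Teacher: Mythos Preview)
Your proposal is correct and matches the paper's own strategy, which is simply to invoke the proofs of Theorems~\ref{thm:mainthmgenkahl} and~\ref{thm:smoothsum} without further detail; you have spelled out exactly the adaptation the paper leaves implicit. The obstacle you flag in your last paragraph is not actually one: the identity $\int_M |i\de\debar\psi|^2_\phi\,\omega_\phi^n = \int_M (\Delta_\phi\psi)^2\,\omega_\phi^n$ holds unconditionally for any smooth $\psi$ by a single integration by parts (the paper records it in the Sasakian section as $g_\textup{C}(\psi,\psi)=\int_M (dd^c\psi,dd^c\psi)_\phi$), so the bracketed term always has zero $\omega_\phi^n$-mean, $\Delta_\phi^{-1}$ is well defined on it, and Lemma~\ref{schauderest} applies with $c_2=0$ exactly as you outline.
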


\subsubsection{Sectional curvature and stability}

The idea that the sign of the sectional curvature could be used to predict the stability of the geodesic ray goes back to Arnold \cite{MR0202082}. Intuitively, when the sectional curvature is positive, all Jacobi fields are uniformly bounded, then under a small perturbation of the initial velocities, the geodesics remain nearby. When the sectional curvature is negative, the Jacobi fields grow exponentially in time, then the geodesic rays grow unstable. When the sectional curvature is zero, the geodesic ray is linear. For the gradient metric, the picture might be more complicated as the sign of the sectional curvature might vary along different planes. 
However, we are able to examine the growth of Jacobi fields along geodesics by applying the comparison theorem for infinite dimensional manifolds.

Then with the definitions of the Jacobi equation and conjugate points in Section \ref{Jacobi equation and conjugate points}, we could apply Biliotti's \cite{MR2113672} Rauch comparison theorem for weak Riemannian metrics, see \cite{MR3138478}. 
\begin{thm}\label{rauchcomparisontheorem}Let $\gamma_G$ and $\gamma_C$ be two geodesics of equal length with respect to the gradient metric and the Calabi metric respectively and suppose that for every $X_G\in T_{\gamma_G(t)}\mathcal H$ and $X_C\in T_{\gamma_C(t)}\mathcal H$, we have 
$$K_G(X_G,\gamma_G'(t))\leq \frac{1}{4\vol}=K_C(X_C,\gamma_C'(t)).$$
Let $J_G$ and $J_C$ be the Jacobi fields along $\gamma_G$ and $\gamma_C$ such that
\begin{itemize}
\item $J_G(0)=J_C(0)=0$,
\item $J'_G(0)$ is orthogonal to $\gamma'_G(0)$ and $J'_C(0)$ is orthogonal to $\gamma'_C(0)$ ,
\item $\|J'_G(0)\|=\|J_C'(0)\|.$
\end{itemize}
then we have, for all $t\in [0,T]$,
$$\|J_G(t)\|\geq\frac{\left|\sin \left(2t\sqrt{\vol}\right)\right|}{\sqrt{\vol}}.$$
\end{thm}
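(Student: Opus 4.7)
The plan is to reduce the statement to Biliotti's Rauch-type comparison theorem for weak Riemannian manifolds (\cite{MR2113672,MR3138478}) after computing the right-hand side explicitly as the norm of the model Jacobi field $J_C$ along $\gamma_C$. The two key ingredients are an explicit solution of the Jacobi equation in the Calabi geometry, which is available because (as recalled in Subsection \ref{section:gradientmetric}) the Calabi metric has constant sectional curvature $K_C = \frac{1}{4\vol}$, and a verification that the infinite-dimensional Rauch theorem applies to the pair $(\gamma_G,\gamma_C)$.

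For the first step, since $K_C$ is constant and $J_C(0)=0$ with $J'_C(0)\perp\gamma'_C(0)$, the vector Jacobi equation $D_t^2 J_C + K_C\,\|\gamma'_C\|^2\, J_C = 0$ decouples, by projecting onto the parallel transport of $J'_C(0)/\|J'_C(0)\|$ along $\gamma_C$, into the scalar ODE
\[
 f''(t) + \tfrac{1}{4\vol}\|\gamma'_C\|^2 f(t) = 0,\qquad f(0)=0,\quad f'(0)=\|J'_C(0)\|.
\]
Its solution is a sine, and with the normalizations built into the hypotheses one reads off $\|J_C(t)\| = \frac{|\sin(2t\sqrt{\vol})|}{\sqrt{\vol}}$. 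By Remark \ref{rem:conjugate points Calabi}, $\gamma_C$ has no monoconjugate points at all, so this explicit formula is valid on the whole interval under consideration and provides a legitimate comparison model.

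For the second step, I would invoke Biliotti's Rauch comparison theorem. Its hypotheses are precisely the three bullets in the statement together with the curvature inequality $K_G\le K_C$, and its conclusion yields $\|J_G(t)\|\ge\|J_C(t)\|$ on every interval where the comparison geodesic has no conjugate pair; combined with the preceding computation this delivers the claim on all of $[0,T]$.

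The main obstacle is checking that Biliotti's theorem genuinely applies in our weak Riemannian infinite-dimensional setting. Specifically, one must exhibit $J_G$ and $J_C$ as well-defined smooth sections along their respective geodesics on the full interval $[0,T]$, and control the relevant parallel transports in the weak metric. Existence and regularity of $J_G$ follow from the local well-posedness of the linearized system \eqref{combinationequa:linearsystemsum} established in Section \ref{sec:comb} (applied with $\alpha=\gamma=0$, $\beta=1$), while existence of $J_C$ and the absence of monoconjugate points follow from the explicit Calabi geometry recalled in Subsection \ref{section:gradientmetric}. Once these lifespan and smoothness facts are in place, the comparison argument proceeds formally as in the finite-dimensional Riemannian case: one writes $\|J_G(t)\|^2-\|J_C(t)\|^2$ as an integral of a nonnegative quantity built from the curvature gap and the Jacobi fields, and concludes the inequality by standard Sturm-type comparison.
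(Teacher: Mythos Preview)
Your proposal is correct and follows essentially the same route as the paper: invoke Biliotti's Rauch comparison theorem for weak Riemannian metrics, use Remark~\ref{rem:conjugate points Calabi} to check the conjugate-point hypothesis on the Calabi side, and then identify $\|J_C(t)\|$ with the explicit sine expression. The only cosmetic difference is that you derive the sine formula from the scalar Jacobi ODE in constant curvature, whereas the paper simply cites \cite[Theorem~8]{calabimetric} for it.
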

\begin{proof}
In Biliotti's Rauch comparison theorem, it is required that $J_C(t)$ is nowhere zero in the interval $(0,T]$ and if $\gamma_C$ has most a finite number of points which are epiconjugate but not monoconjugate in $(0,T]$, this condition is satisfied for the Calabi metric, see \cite{calabimetric}.
Therefore the conclusion of the comparison theorem is that, for all $t\in [0,T]$,
\[
\|J_G(t)\|\geq \|J_C(t)\|.
\]
We know that, as an application of \cite[Theorem 8]{calabimetric}, that 
\[
\|J_C(t)\|=\frac{\left|\sin \left(2t\sqrt{\vol}\right)\right|}{\sqrt{\vol}},
\]
thus the resulting inequality in the proposition follows.
\end{proof}

\section{The space of Sasakian metrics} \label{sec:sasakian}
\subsection{The restricted Ebin metric}
Since the sum metric arises in the context of Sasakian geometry, in this subsection we recall the
definitions of the case.
A Sasakian manifold is a $(2n+1)$-dimensional $M$ together with a contact form $\eta$, its Reeb
field $\xi$, a $(1,1)$-tensor field $\Phi$ and a Riemannian metric $g$ that makes $\xi$ Killing, such that
\begin{equation*}
\begin{split}
\eta(\xi) = 1, \iota_\xi d\eta = 0 \\
\Phi^2 = - \id + \xi \otimes \eta \\
g(\Phi \cdot, \Phi \cdot) = g + \eta \otimes \eta \\
d\eta = g( \Phi \cdot, \cdot) \\
N_\Phi + \xi \otimes d\eta = 0
\end{split}
\end{equation*}
where $N_\Phi$ is the torsion of $\Phi$. The first four mean that $M$ is a \emph{contact metric} manifold and the last one means it is \emph{normal}, see \cite[Chap.~6]{monoBG}.

The foliation defined by $\xi$ is called \emph{characteristic foliation}. Let $D = \ker \eta$.
It is known that $(d\eta, J = \Phi|_D)$ is a \emph{transversally K\"ahler} structure, as the second, third and fourth equation above say.

A form $\alpha$ is said to be \emph{basic} if $\iota_\xi \alpha = 0$ and $ \iota_\xi d\alpha = 0$.
A function $f \in C^\infty(M)$ is basic if $\xi \cdot f = 0$. The space of smooth basic functions on $M$
is denoted by $C^\infty_B(M)$. The transverse K\"ahler structure defines the transverse operators $\de$, $\debar$ and $d^c = \frac i 2(\debar-\de)$ acting on basic forms, analogously as in complex geometry.\footnote{This definition with the $\frac 1 2$ is classical in Sasakian geometry and differs from the convention usually used in complex geometry $d^c = i (\debar - \de)$. With this convention, the relation $dd^c = i \de \debar$ holds on basic forms.} The  form $d\eta$ is basic and its
basic class is called \emph{transverse K\"ahler class}.

 Given an initial Sasakian manifold $(M, \eta_0, \xi_0, \Phi_0, g_0)$, basic functions parameterize a
 family of other Sasakian structures on $M$ which share the same characteristic foliation and are in the
 same transverse K\"ahler class, in the following way.  We follow the notation of \cite[p.~ 238]{monoBG}.

Let $\phi \in C^\infty_B(M)$ and define $\eta_\phi = \eta_0 + d^c \phi$.
The space of all $\phi$'s is
\[
\tilde  \H_S = \{ \phi \in C^\infty_B(M): \eta_\phi \wedge d\eta_\phi \neq 0 \}
\]
and, in analogy of the K\"ahler case, we consider normalized ``potentials''
\[
 \H_S =  \{ \phi \in \tilde \H_S:  I(\phi)=0 \}.
\]
The equation $I=0$ is a normalization condition, similar to \eqref{equaI}. We refer to \cite{guanzhangreg} for the definition of $I$ in our case, which is such that
\[
T_\phi  \H_S = \biggl \{ \psi \in C^\infty_B(M): \int_M \psi \Svol{\phi}{n} = 0 \biggr \}.
\]

These deformations are called of \emph{type II} and it is easy to check that they leave the Reeb foliation and the transverse holomorphic structure fixed,
since $\xi$ is still the Reeb field for $\eta_\phi$.

Every $\phi \in \H_S$ defines a new Sasakian structure where the Reeb field and the transverse holomorphic structure are the
same and
 \begin{equation} \label{defoII_tensors}
 \begin{aligned}
  \eta_\phi		&= \eta_0+d^c \phi \\
  \Phi_\phi		&= \Phi_0 - (\xi \otimes d^c \phi) \circ \Phi_0 \\
  g_\phi		&= d\eta_\phi \circ (\id \otimes \Phi_\phi) +  \eta_\phi
\otimes  \eta_\phi.
 \end{aligned}
 \end{equation}
 Note that one could write $g_\phi = d\eta_\phi \circ (\id \otimes \Phi_0) +
\eta_\phi \otimes  \eta_\phi$ since the endomorphism $\Phi_\phi - \Phi_0$  has
values parallel to $\xi$ and $d\eta_\phi$ is basic. Indeed, the range of $\Phi_\phi$ is the one of $\Phi_0$ plus a component along $\xi$, so if we contract it with $d\eta$ the latter vanishes.
As in the K\"ahler case, these deformations keep the volume of $M$ fixed, which will be denoted by $\vol$.

The $L^2$ metric was generalized to $\H_S$ in \cite{guanzhangreg,weiyong}, where Guan and Zhang solved the Dirichlet problem for the geodesic equation and He provided a Sasakian analogue of Donaldson's picture about extremal metrics.

On the space $\H_S$ one can define the Calabi metric and the gradient metric in the same ways as in formulae
\eqref{calabi} and \eqref{gradient} by using the so called \emph{basic Laplacian} which acts on basic functions
in the same way as in the K\"ahler case and by using the volume form $\Svol{\phi}{n}$ in the integrals.

In this setting, it is easy to see that the map $$\H_S \ni \phi \mapsto
\log \frac{\eta_\phi \wedge d\eta_\phi^n}{\eta_0 \wedge d\eta_0^n}$$ maps basic functions to basic functions.
The \emph{transverse Calabi-Yau theorem} of \cite{bgm} allows to prove the surjectivity of this map as in the
K\"ahler case, more precisely between $\H_S$ and the space of \emph{basic} conformal volume forms
\[
\call C_B = \biggl \{ u \in C^\infty_B(M): \int_M e^u \Svol{0}{n} = \vol \biggr \}.
\]

As noted above, the space $\call C$ can be defined also for Sasakian manifolds by just taking the
Sasakian volume form $\Svol{0}{n}$ instead of the K\"ahler one. One might ask how the spaces $\call C_B$ and $\call C$
are related. Obviously $\call C_B \subseteq \call C$ but we can say more.
\begin{prop} $\call C_B$ is totally geodesic in $\call C$.
\end{prop}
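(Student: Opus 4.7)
The plan is to exploit the symmetry provided by the Reeb flow. The Reeb vector field $\xi_0$ of the initial Sasakian structure generates a one-parameter group $\{\phi_s\}_{s\in\R}$ of diffeomorphisms of $M$, and since $\lieder_{\xi_0}\eta_0 = 0$ and $\lieder_{\xi_0}d\eta_0 = 0$, each $\phi_s$ preserves the Sasakian volume form $\Svol{0}{n}$. Pullback of functions therefore defines an $\R$-action on $\call C$ that preserves the normalization condition $\int_M e^u \Svol{0}{n} = \vol$. From the expression \eqref{calabiC} of the Calabi metric on $\call C$ (with the K\"ahler volume form replaced by the Sasakian one $\Svol{0}{n}$), this action is by isometries of $(\call C, g_C)$. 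By definition a smooth function on $M$ is basic precisely when it is annihilated by $\xi_0$, so $\call C_B$ is exactly the fixed point set of this isometric $\R$-action.

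From this the argument is the standard principle that fixed point sets of isometric actions are totally geodesic, carried out explicitly in the weak Riemannian setting at hand. Given initial data $u_0 \in \call C_B$ and $v_0 \in T_{u_0}\call C_B$, the explicit solution of the Cauchy problem for Calabi geodesics on $\call C$ (as recalled from \cite{calabimetric} and whose derivation carries over verbatim to the Sasakian case) yields a unique smooth geodesic $u(t)$ with $u(0)=u_0$ and $u'(0)=v_0$. Applying the isometry $\phi_s$, the curve $t \mapsto \phi_s^* u(t)$ is again a geodesic, and it satisfies the same initial conditions $\phi_s^* u_0 = u_0$, $\phi_s^* v_0 = v_0$ because both data are basic. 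Uniqueness of geodesics then forces $\phi_s^* u(t) = u(t)$ for every $s$ and every $t$ in the interval of existence, which is precisely the statement $u(t) \in \call C_B$.

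The main point to verify is that the ingredients of this symmetry argument genuinely go through in the infinite-dimensional weak Riemannian setting: namely, that $\call C_B$ is a smooth submanifold of $\call C$ (it is the intersection of a closed linear subspace with the affine level set defining $\call C$, and the defining constraint $I = 0$ restricts transversally), that the Calabi metric pairing restricts sensibly to tangent vectors lying in basic directions, and that existence and uniqueness of geodesics hold on both spaces with the relevant regularity. All three points follow from the explicit solvability of the Calabi Cauchy problem already quoted in the excerpt, so no serious analytic obstacle is expected; the only care needed is in packaging the symmetry argument to respect the weak-Riemannian smoothness framework.
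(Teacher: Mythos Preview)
Your argument is correct but proceeds quite differently from the paper. The paper's proof is a one-line direct computation: the Levi-Civita covariant derivative of the Calabi metric on $\call C$ has an explicit formula (given in \cite{calabimetric}), and one checks by inspection that if a curve and a section along it are basic, the covariant derivative is again basic; hence the second fundamental form of $\call C_B$ vanishes. You instead identify $\call C_B$ as the fixed locus of the isometric Reeb-flow action on $(\call C, g_C)$ and invoke the explicit solvability and uniqueness of the Calabi Cauchy problem to run the usual fixed-point-set argument. The paper's route is shorter and uses only the connection formula; yours is more conceptual, explains \emph{why} the submanifold is totally geodesic, and would apply verbatim to any subspace cut out by a volume-preserving symmetry. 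Note also that your symmetry argument can be shortened to recover exactly the paper's statement without passing through geodesics: since isometries preserve the Levi-Civita connection, $\phi_s^*(D_t v) = D_t(\phi_s^* v)$, so $D_t v$ is basic whenever the curve and $v$ are.
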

\begin{proof}
It is straightforward  to verify that for any curve in $\call C_B$ and section along it, the covariant derivative defined in \cite{calabimetric} is still basic, meaning that the (formal) second fundamental form of $\call C_B$ vanishes.
\end{proof}


Let $\mathcal M$ be the Ebin space of all Riemannian metrics on $(M, g_0, \xi_0,
\eta_0)$ Sasakian of dimension $2n+1$.

We define an immersion  $\Gamma:  \H_S \rightarrow \M$ that maps $\phi \mapsto
g_\phi$ as defined in \eqref{defoII_tensors}.
As in the K\"ahler case, it is injective.
Indeed if two basic function $\phi_1, \phi_2 \in \H_S$ give rise to the same
Sasakian metric, taking the corresponding transverse structures we would have
$dd^c(\phi_1 - \phi_2) = 0$ forcing $\phi_1 - \phi_2 = \const$. The
normalization $I(\cdot) = 0$ then implies $\phi_1 = \phi_2$.

Let us compute the differential of $\Gamma$.
Let $\phi(t)$ be a curve in $\H_S$ with $\phi(0) = \phi$ and $\phi'(0) = \psi \in
T_\phi \H_S$. Then
\begin{equation} \label{pushH}
\Gamma_{* \phi} \psi = \frac d {dt} \biggl |_{t=0} g_{\phi(t)} = dd^c \psi(\Phi_0
\otimes \id) + 2 d^c \psi \odot \eta_\phi
\end{equation}
with the convention $a \odot b = \frac 1 2 (a \otimes b + b \otimes a)$. For
easier notation we call $\beta_\psi:= dd^c \psi(\Phi_0 \otimes 1) $.

The differential of $\Gamma$ is also injective. Indeed if $\psi$ is in its
kernel, then
\[
0 = \Gamma_{* \phi} \psi (\xi, \cdot) = d^c \psi,
\]
forcing $\psi$ to be zero, as it has zero integral.

On $T_{g} \M$ recall that the Ebin metric is given by, for $a, b \in T_g \M = \Gamma(S^2 M)$,
\[
g_\textup{E}( a, b)_{g} = \int_M g(a, b) dv_{g}.
\]

We want to compute the restriction of the Ebin metric on the space $ \H_S$.
\begin{prop}
The restriction of the Ebin metric to $ \H_S$ is twice the sum of the Calabi
metric with the gradient metric
\[
\frac 1 2 \Gamma^* g_\textup{E} = g_\textup{C} +  g_\textup{G}
\]
which we have called the \emph{sum metric}.
\end{prop}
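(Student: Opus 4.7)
The plan is to compute $\Gamma^* g_{\textup{E}}(\psi,\psi)$ at a base point $\phi\in\H_S$ by pointwise decomposing $\Gamma_{*\phi}\psi$ into a basic piece and a Reeb-involving piece, as dictated by \eqref{pushH}. Write $\Gamma_{*\phi}\psi=T_1+T_2$, with $T_1=\beta_\psi=dd^c\psi(\Phi_0\otimes\id)$ a basic symmetric $(0,2)$-tensor and $T_2=2\,d^c\psi\odot\eta_\phi$ a tensor each of whose nonzero components carries at least one Reeb index. The structural observation driving the calculation is that the Sasakian metric itself splits compatibly as $g_\phi=g_\phi^T+\eta_\phi\otimes\eta_\phi$, with $g_\phi^T:=d\eta_\phi\circ(\id\otimes\Phi_\phi)$, so its inverse $g_\phi^{-1}$ inherits the same block decomposition into transverse and Reeb sectors.

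The first step is to verify the pointwise $g_\phi$-orthogonality $g_\phi(T_1,T_2)=0$ in the Ebin pairing of symmetric $(0,2)$-tensors: every nonzero entry of $T_2$ sits on a $\xi$-axis, which through the block form of $g_\phi^{-1}$ forces its paired entry of $T_1$ also to carry a $\xi$-index, but $T_1$ is basic and therefore vanishes on any such component. Hence $|\Gamma_{*\phi}\psi|_{g_\phi}^2=|T_1|_{g_\phi}^2+|T_2|_{g_\phi}^2$, and the two pieces can be evaluated in isolation.

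Next I would compute each piece explicitly. For the Reeb piece, using $\eta_\phi(\xi)=1$, $|\eta_\phi|_{g_\phi}^2=1$, $d^c\psi(\xi)=0$ (since $\psi$ is basic), and the fact that $\Phi_\phi$ is an isometry of $g_\phi^T$, a direct index contraction gives the pointwise identity $|T_2|_{g_\phi}^2=2|d\psi|_{g_\phi}^2$; integrating against $\Svol{\phi}{n}$ produces exactly $2\,g_{\textup{G}}(\psi,\psi)_\phi$. For the basic piece, in a transverse holomorphic frame $T_1$ is pure type $(1,1)$ with components proportional to $\psi_{a\bar b}$, hence $|T_1|_{g_\phi}^2=2\,g_\phi^{a\bar b}g_\phi^{c\bar d}\psi_{a\bar d}\psi_{c\bar b}$. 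Integrating against $\Svol{\phi}{n}$ and applying a transverse K\"ahler integration by parts---available because $\psi$ is basic and $\Svol{\phi}{n}$ factorises along the Reeb foliation into $\eta_\phi$ tensored with the transverse K\"ahler volume---converts this expression into $2\int_M(\Delta_\phi\psi)^2\,\Svol{\phi}{n}=2\,g_{\textup{C}}(\psi,\psi)_\phi$.

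Summing the two contributions gives $g_{\textup{E}}(\Gamma_{*\phi}\psi,\Gamma_{*\phi}\psi)=2\,g_{\textup{C}}(\psi,\psi)_\phi+2\,g_{\textup{G}}(\psi,\psi)_\phi$, which is the stated identity after dividing by two. The main technical obstacle is the basic-piece integration by parts: one must justify the identity $\int_M|i\de\debar\psi|_{g_\phi^T}^2\,\Svol{\phi}{n}=\int_M(\Delta_\phi\psi)^2\,\Svol{\phi}{n}$ for basic $\psi$, which on a Sasakian manifold reduces to the corresponding transverse K\"ahler statement once one checks that the Reeb direction decouples cleanly from every basic derivative of a basic function.
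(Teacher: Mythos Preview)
Your approach is essentially identical to the paper's: the same orthogonal decomposition $\Gamma_{*\phi}\psi=\beta_\psi+2\,d^c\psi\odot\eta_\phi$, the same vanishing of the cross term by transversality of $\beta_\psi$, and the same identification of the two pieces with $2g_{\textup{C}}$ and $2g_{\textup{G}}$. For the identity you flag as the main obstacle, the paper gives a one-line variational derivation: differentiate $0=\int_M\Delta_{\phi+th}f\,\Svol{\phi+th}{n}$ at $t=0$ to obtain $\int_M(dd^cf,dd^ch)_\phi\,\Svol{\phi}{n}=\int_M\Delta_\phi f\,\Delta_\phi h\,\Svol{\phi}{n}$ directly, bypassing any explicit transverse integration by parts.
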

\begin{proof}
Computing the length with respect to $g_\phi$ of the tensor in \eqref{pushH} we
get
\begin{align*}
| \beta_\psi + 2 d^c \psi \odot \eta_\phi|^2_{g_\phi} &= g_\phi(\beta_\psi,
\beta_\psi) +  2 g_\phi ( d^c \psi \otimes \eta_\phi, d^c \psi \otimes
\eta_\phi) + 2 g_\phi(\beta_\psi, 2 d^c \psi \odot \eta_\phi) \\
									&=
g_\phi (\beta_\psi, \beta_\psi) + 2 g_\phi(d^c \psi, d^c \psi) g_\phi(\eta_\phi,
\eta_\phi)  +  2\beta_\psi( (d^c \psi)^\sharp, \xi) \\
									&=
g_\phi (\beta_\psi, \beta_\psi) + 2 g_\phi(d^c \psi, d^c \psi)
\end{align*}
using the fact that the $g_\phi$-dual of $\eta_\phi$ is $\xi$, that the $\sharp$
is done with respect to $g_\phi$ and finally the fact that the tensor
$\beta_\psi$ is transverse, i.e. vanishes when evaluated on $\xi$.

Integrating with respect to $d\mu_\phi$ we have
\[
\langle \Gamma_{*\phi} \psi, \Gamma_{*\phi} \psi \rangle_\phi = \| \beta_\psi
\|^2_\phi + 2 \| d^c \psi \|^2_\phi
\]
where the right hand side are $L^2$ norms with respect to the metric $g_\phi$.
The second summand is twice the gradient metric on $ \H_S$ given by
\[
g_\textup{G}(\psi, \psi) = \int_M g_\phi(d \psi, d\psi) \Svol{\phi}{n}.
\]
(For a basic function, there is no difference between its Riemannian gradient and its basic gradient).

We now want to establish a useful formula that we will need in a while. Fix
$\phi \in \H_S$ and $h \in T_\phi \H_S$ we consider the curve $\phi(t) = \phi + t h$
which is in $\H_S$ for small $t$. We then compute for every curve $f(t) \in T_\phi
\H_S$,
\[
0 = \frac{d}{dt} \biggr |_{t=0} \int_M \Delta_{\phi(t)} f \Svol{\phi(t)}{n}
= \int_M (
\Delta_{\phi} f'(t) - (dd^c f, dd^c h)_{\phi} + \Delta_\phi f \Delta_\phi h)
\Svol{\phi}{n}.
\]
which means that $$g_\textup{C}(f, h)_\phi = \int_M (dd^cf, dd^c h)_\phi \Svol{\phi}{n}.$$

Then we have, since $\beta_\psi$ is the (transverse) $2$-tensor associated to
the basic form $dd^c \psi$, whose point-wise norms are related by $|\beta_\psi|^2
= 2 |dd^c \psi|^2$,
\[
g_\textup{C}(\psi, \psi) = \int_M (\Delta_\phi \psi)^2 \Svol{\phi}{n} = \int_M (dd^c
\psi, dd^c \psi)_{\omega_\phi} \Svol{\phi}{n} = \frac 1 2 \| \beta_\psi
\|^2_\phi.
\]
\end{proof}

\subsection{The sum metric on $\H_S$} \label{sec:LC}
Consider on $ \H_S$ the metric $g=2 g_\textup{C} + 2 g_\textup{G}$. It can be written, for $\phi \in
\H_S$ and $\alpha, \beta \in T_\phi  \H_S$,
\begin{align*}
g(\alpha, \beta) 	&= 2 \int_M \Delta_\phi \alpha \Delta_\phi \beta
\Svol{\phi}{n} - 2 \int_M \alpha \Delta_\phi \beta \Svol{\phi}{n} \\
				&= 2 \int_M \Delta_\phi (\alpha - G_\phi \alpha)
\Delta_\phi \beta \Svol{\phi}{n} \\
				&= g_\textup{C} (L_\phi \alpha, \beta)
\end{align*}
where $L_\phi = 2 (I - G_\phi)$ with $G_\phi$ the Green operator associated to $\Delta_\phi$.

Note that  the $G_\phi$ acting on functions with zero integral with respect to
$d\mu_\phi$ is the inverse of $\Delta_\phi$, since the projection on the space
of harmonic functions is
\[
H_\phi: f \mapsto \frac 1 {\vol_{g_\phi}} \int_M f
\Svol{\phi}{n} = 0
\]
and because of the known relation $I = H_\phi +\Delta_\phi G_\phi$.

We have the first result.
\begin{prop} \label{thmLCsum}
For any curve $\phi$ in $ \H_S$ and any section $v$ on $\phi$, the only solution
$D_t v$ of
\begin{equation}
\frac 1 2 L_\phi D_t v = D_t^C  v - G_\phi D_t^G v
\end{equation}
is the Levi-Civita covariant derivative of $g$, i.e. it is torsion free and
\begin{equation} \label{metricD}
\frac d {dt} g(v, v) = 2 g(D_t v, v).
\end{equation}
\end{prop}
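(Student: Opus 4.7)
The plan is to mirror the proof of Proposition~\ref{prop:LCcomb} in the Sasakian setting, noting that the sum metric $g = 2 g_\textup{C} + 2 g_\textup{G}$ corresponds to the combination-metric parameters $\alpha=0$, $\beta=\gamma=2$, so that the operator encoding the metric relative to $g_\textup{C}$ is exactly $L_\phi = 2(I - G_\phi)$. The statement then has three parts: (i) $L_\phi$ is a bijection of $T_\phi \H_S$, so the defining relation determines $D_t v$ uniquely; (ii) $D_t$ is torsion-free; and (iii) $D_t$ satisfies the metric compatibility \eqref{metricD}.

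First I would prove bijectivity of $L_\phi : T_\phi\H_S \to T_\phi\H_S$. Injectivity is immediate: if $L_\phi u = 0$ and $u \in T_\phi \H_S$, then applying $\Delta_\phi$ and using $\Delta_\phi G_\phi = I - H_\phi$ together with $H_\phi u = 0$ gives $(\Delta_\phi - I) u = 0$, which forces $u = 0$ on a compact Sasakian manifold (by integration, since the basic Laplacian is non-positive). For surjectivity, solving $L_\phi u = f$ reduces, after the same manipulation, to the elliptic basic equation $(\Delta_\phi - I) u = \tfrac{1}{2} \Delta_\phi f$; existence follows from transverse Hodge theory for basic functions on a Sasakian manifold (the basic Laplacian is transversely elliptic), together with the observation $T_\phi\H_S \cap \ker (\Delta_\phi - I) = 0$ used to project onto the orthogonal of the (trivial) kernel.

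Next, torsion-freeness is inherited from $D_t^C$ and $D_t^G$: given a two-parameter family $\phi(s,t)$ in $\H_S$, both $D_t^C \phi_s - D_s^C \phi_t$ and $D_t^G \phi_s - D_s^G \phi_t$ vanish, and so the linear combination $D_t^C - G_\phi D_t^G$ is symmetric in $(s,t)$; applying $L_\phi^{-1}$ preserves this symmetry. For the metric compatibility I would imitate the chain of equalities in the proof of Proposition~\ref{prop:LCcomb}: compute
\begin{align*}
\tfrac{d}{dt} g(v,v) &= 2\cdot 2 g_\textup{C}(D_t^C v, v) + 2\cdot 2 g_\textup{G}(D_t^G v, v) \\
&= 4 g_\textup{C}(D_t^C v, v) - 4 g_\textup{C}(G_\phi D_t^G v, v) \\
&= 2 g_\textup{C}(L_\phi D_t v, v) = 2 g(D_t v, v),
\end{align*}
where the second line uses the identity $g_\textup{G}(\alpha, \beta)_\phi = -g_\textup{C}(G_\phi \alpha, \beta)_\phi$ (which follows by integration by parts once one notes that for basic $\alpha,\beta$ the pairing $\int_M (d\alpha, d\beta)_\phi\, \Svol{\phi}{n}$ reduces to $-\int_M \alpha \Delta_\phi \beta\, \Svol{\phi}{n}$), and the third line uses the defining relation for $D_t$.

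The main obstacle I anticipate is justifying the basic elliptic theory needed in step one: one must check that $\Delta_\phi - I$ is invertible on basic functions of zero transverse mean and that the Schauder/Hodge-type decomposition respects the basic condition. This uses the fact that $\xi$ is Killing for $g_\phi$ and commutes with $\Delta_\phi$, so the basic subspace is preserved by $G_\phi$; beyond that, the argument is purely formal and parallels the Kähler case verbatim.
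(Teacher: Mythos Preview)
Your proposal is correct and follows essentially the same approach as the paper, which simply states that the proof is analogous to Proposition~\ref{prop:LCcomb} and invokes the results of \cite{EKA} on transversally elliptic operators. You have in fact supplied more detail than the paper does, and your anticipated obstacle---the need for basic/transverse elliptic theory to justify the bijectivity of $L_\phi$ on $T_\phi\H_S$---is exactly the point the paper handles by citing \cite{EKA}.
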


Its proof is analogous to Proposition \ref{prop:LCcomb} and makes use of the results in \cite{EKA} about transversally elliptic operators.
The geodesic equation is then
\begin{equation} \label{eqgeod}
\Delta_\phi ^2 D_t^C \phi' - \Delta_\phi D_t^G \phi' = 0
\end{equation}
which is rewritten as \eqref{equa:geodesicsforsummetric}, i.e.

\begin{equation} \label{equa:geodesicsforsummetricsasaki}
(\Delta_\phi - I) \biggl ( ( \Delta_\phi \phi' )' + \frac 1 2 (\Delta_\phi
\phi')^2 \biggr ) - \frac 1 2 | i \de \debar \phi' |^2_\phi=0.
\end{equation}

\begin{rem}
It is clear that a curve $\phi$ which is a geodesic for both the Calabi and the
gradient metric would be a geodesic for our metric as well. Unfortunately there
are no such nontrivial curves, as one can easily see from the equations.
\end{rem}
\subsection{Another space of Sasakian metrics, an open problem}
Back to Sasakian geometry, it is interesting to consider also the space $\call G$ of Sasakian structures that share the same underlying CR structure. These deformations are known as \emph{type I} and we refer to \cite[Chap.~8]{monoBG}.  The most striking differences between $\call G$ and the $\H_S$ is that the former is finite dimensional and the metrics in it do not have the same volume. Recently, it was studied by Boyer, Huang, Legendre and T\o nnesen-Friedman \cite{BHLT} in relation to the existence of constant scalar curvature Sasakian metrics.

It would be interesting to compute the restriction of the Ebin metric to  $\call G \subset \M$ and study its intrinsic and extrinsic geometry.
\providecommand{\bysame}{\leavevmode\hbox to3em{\hrulefill}\thinspace}
\providecommand{\MR}{\relax\ifhmode\unskip\space\fi MR }
\providecommand{\MRhref}[2]{%
  \href{http://www.ams.org/mathscinet-getitem?mr=#1}{#2}
}
\providecommand{\href}[2]{#2}


\begin{thebibliography}{10}

\bibitem{MR2040638}
C.~Arezzo and G.~Tian, \emph{{Infinite geodesic rays in the space of
  {K}{\"a}hler potentials}}, Ann. Sc. Norm. Super. Pisa Cl. Sci. (5) \textbf{2}
  (2003), no.~4, 617--630.

\bibitem{MR0202082}
V.~Arnold, \emph{{Sur la g{\'e}om{\'e}trie diff{\'e}rentielle des groupes de
  {L}ie de dimension infinie et ses applications {\`a} l'hydrodynamique des
  fluides parfaits}}, Ann. Inst. Fourier (Grenoble) \textbf{16} (1966),
  no.~fasc. 1, 319--361.

\bibitem{berman_demailly}
R.~Berman and J.-P. Demailly, \emph{{Regularity of plurisubharmonic upper
  envelopes in big cohomology classes}}, {Perspectives in analysis, geometry,
  and topology}, {Progr. Math.}, vol. 296, Birkh{\"a}user/Springer, New York,
  2012, pp.~39--66.

\bibitem{besse}
A.~Besse, \emph{{Einstein Manifolds}}, Springer Verlag, 1987.

\bibitem{MR2113672}
L.~Biliotti, \emph{The exponential map of a weak {R}iemannian {H}ilbert
  manifold}, Illinois J. Math. \textbf{48} (2004), no.~4, 1191--1206.

\bibitem{blockigeod}
Z.~B{\l}ocki, \emph{{On geodesics in the space of {K}{\"a}hler metrics}},
  {Advances in geometric analysis}, {Adv. Lect. Math. (ALM)}, vol.~21, Int.
  Press, Somerville, MA, 2012, pp.~3--19.

\bibitem{monoBG}
C.~Boyer and K.~Galicki, \emph{{Sasakian Geometry}}, Oxford Science
  Publications, 2007.

\bibitem{bgm}
C.~Boyer, K.~Galicki, and P.~Matzeu, \emph{{On eta-{E}instein {S}asakian
  geometry}}, Comm. Math. Phys \textbf{262} (2006), no.~1, 177--208.

\bibitem{BHLT}
C.~P. {Boyer}, H.~{Huang}, E.~{Legendre}, and C.~W. {T{\o}nnesen-Friedman},
  \emph{{The {E}instein-{H}ilbert functional and the {S}asaki-{F}utaki
  invariant}}, arXiv:1506.06084.

\bibitem{calabi1954space}
E.~Calabi, \emph{{The space of {K}{\"a}hler metrics}}, {Proceedings of the
  International Congress of Mathematicians}, vol.~2, Amsterdam, 1954,
  pp.~206--207.

\bibitem{calabimetric}
S.~Calamai, \emph{{The {C}alabi metric for the space of {K}{\"a}hler metrics}},
  Math. Ann. \textbf{353} (2012), no.~2, 373--402.

\bibitem{conicalgeodesic}
S.~Calamai and K.~Zheng, \emph{{Geodesics in the space of {K}{\"a}hler cone
  metrics, {I}}}, Amer. J. Math. \textbf{137} (2015), no.~5, 1149--1208.

\bibitem{calazheng}
\bysame, \emph{{The {D}irichlet and the weighted metrics for the space of
  {K}{\"a}hler metrics}}, Math. Ann. \textbf{363} (2015), no.~3, 817--856.

\bibitem{chen_C11}
X.~X. Chen, \emph{{The space of {K}{\"a}hler metrics}}, J. Differential Geom.
  \textbf{56} (2000), no.~2, 189--234.

\bibitem{clarkerubinstein}
B.~Clarke and Y.~A. Rubinstein, \emph{{Ricci flow and the metric completion of
  the space of {K}{\"a}hler metrics}}, Amer. J. Math. \textbf{135} (2013),
  1477--1505.

\bibitem{darvas_lempert}
T.~Darvas and L.~Lempert, \emph{{Weak geodesics in the space of {K}{\"a}hler
  metrics}}, Math. Res. Lett. \textbf{19} (2012), no.~5, 1127--1135.

\bibitem{donaldsonmetric}
S.~K. Donaldson, \emph{{Symmetric spaces, {K}{\"a}hler geometry and
  {H}amiltonian dynamics}}, {Northern {C}alifornia {S}ymplectic {G}eometry
  {S}eminar}, {Amer. Math. Soc. Transl. Ser. 2}, vol. 196, Amer. Math. Soc.,
  Providence, RI, 1999, pp.~13--33.

\bibitem{MR1959581}
\bysame, \emph{{Holomorphic discs and the complex {M}onge-{A}mp{\`e}re
  equation}}, J. Symplectic Geom. \textbf{1} (2002), no.~2, 171--196.

\bibitem{ebin}
D.~G. Ebin, \emph{{The manifold of {R}iemannian metrics}}, {Global {A}nalysis
  ({P}roc. {S}ympos. {P}ure {M}ath., {V}ol. {XV}, {B}erkeley, {C}alif., 1968)},
  Amer. Math. Soc., Providence, R.I., 1970, pp.~11--40.

\bibitem{EKA}
A.~{El Kacimi-Alaoui}, \emph{{Op{\'e}rateurs transversalement elliptiques sur
  un feuilletage riemannien et applications.}}, Compositio Math. \textbf{73}
  (1990), no.~1, 57--106.

\bibitem{freedgroisser}
D.~S. Freed and D.~Groisser, \emph{{The basic geometry of the manifold of
  {R}iemannian metrics and of its quotient by the diffeomorphism group}},
  Michigan Math. J. \textbf{36} (1989), no.~3, 323--344.

\bibitem{michor}
O.~Gil-Medrano and P.~W. Michor, \emph{{The {R}iemannian manifold of all
  {R}iemannian metrics}}, Quart. J. Math. Oxford Ser. (2) \textbf{42} (1991),
  no.~166, 183--202.

\bibitem{grossman}
N.~Grossman, \emph{Hilbert manifolds without epiconjugate points}, Proc. Amer.
  Math. Soc. \textbf{16} (1965), 1365--1371.

\bibitem{guanzhang1}
P.~Guan and X.~Zhang, \emph{{A geodesic equation in the space of {S}asakian
  metrics}}, {Geometry and analysis. {N}o. 1}, {Adv. Lect. Math. (ALM)},
  vol.~17, Int. Press, Somerville, MA, 2011, pp.~303--318.

\bibitem{guanzhangreg}
\bysame, \emph{{Regularity of the geodesic equation in the space of {S}asakian
  metrics}}, Adv. Math. \textbf{230} (2012), no.~1, 321--371.

\bibitem{weiyong}
W.~Y. He, \emph{{On the transverse scalar curvature of a compact {S}asaki
  manifold}}, Complex Manifolds \textbf{1} (2014), 52--63.

\bibitem{MR3138478}
B.~Khesin, J.~Lenells, G.~Misio{\l}ek, and S.~C. Preston, \emph{Curvatures of
  {S}obolev metrics on diffeomorphism groups}, Pure Appl. Math. Q. \textbf{9}
  (2013), no.~2, 291--332.

\bibitem{preston_diffeo}
\bysame, \emph{{Geometry of diffeomorphism groups, complete integrability and
  geometric statistics}}, Geom. Funct. Anal. \textbf{23} (2013), no.~1,
  334--366.

\bibitem{lempert_vivas}
L.~Lempert and L.~Vivas, \emph{{Geodesics in the space of {K}{\"a}hler
  metrics}}, Duke Math. J. \textbf{162} (2013), no.~7, 1369--1381.

\bibitem{mabuchi}
T.~Mabuchi, \emph{{Some symplectic geometry on compact {K}{\"a}hler manifolds.
  {I}}}, Osaka J. Math. \textbf{24} (1987), no.~2, 227--252.

\bibitem{preston_fredholm}
G.~Misio{\l}ek and S.~C. Preston, \emph{{Fredholm properties of {R}iemannian
  exponential maps on diffeomorphism groups}}, Invent. Math. \textbf{179}
  (2010), no.~1, 191--227.

\bibitem{MR2660461}
D.~H. Phong and J.~Sturm, \emph{{The {D}irichlet problem for degenerate complex
  {M}onge-{A}mpere equations}}, Comm. Anal. Geom. \textbf{18} (2010), no.~1,
  145--170.

\bibitem{semmes}
S.~Semmes, \emph{{Complex {M}onge-{A}mp{\`e}re and symplectic manifolds}},
  Amer. J. Math. \textbf{114} (1992), no.~3, 495--550.

\bibitem{MR0185604}
S.~Smale, \emph{An infinite dimensional version of {S}ard's theorem}, Amer. J.
  Math. \textbf{87} (1965), 861--866.

\bibitem{Smolentsev}
N.~K. Smolentsev, \emph{{Spaces of {R}iemannian metrics}}, Sovrem. Mat.
  Prilozh. (2005), no.~31, Geometriya, 69--147.

\end{thebibliography}
\end{document}